\documentclass[11pt]{article}

\usepackage{color}

\usepackage{amsmath,amsthm}
\usepackage{amsfonts}
\usepackage{latexsym}
\usepackage{indentfirst}
\usepackage{psfrag,epsf}
\usepackage{pstool}

\usepackage{epsfig, subfigure}
\usepackage{graphicx}
\usepackage{epstopdf}
\usepackage{enumerate}
\usepackage[subnum]{cases}
\usepackage{geometry}
\usepackage{fullpage}
\usepackage{tikz}

\usetikzlibrary{shapes,arrows,calc}
\usepackage{multicol}  

\usepackage{bm}

\newtheorem{theorem}{Theorem} \rm
\newtheorem{lemma}[theorem]{Lemma}

\newtheorem{claim}[theorem]{Claim}

\theoremstyle{plain}

\title{The Alon--Tarsi Number of Squares of Subcubic Planar Graphs without Cycles of Lengths $4$ to $8$}\author
{
Seog-Jin Kim$^{\rm a, b}$\thanks{E-mail: {\tt skim12@konkuk.ac.kr
}},
Xiaopan Lian$^{\rm c}$\thanks{E-mail: {\tt Lian@nankai.edu.cn
}},
Rong Luo$^{\rm d}$\thanks{E-mail: {\tt rluo@math.wvu.edu
}},
\\
{\footnotesize$^{\rm a}$ Department of Mathematics Education, Konkuk University, Seoul, Korea}\\
{\footnotesize$^{\rm b}$ Korea Institute for Advanced Study (KIAS), Seoul, Korea.}\\
{\footnotesize$^{\rm c}$ Center for Combinatorics and LPMC, Nankai University, China}\\
{\footnotesize$^{\rm d}$ Department of Mathematics, West Virginia University, USA}\\
}
\date{}
\begin{document}

\maketitle

\begin{abstract}

The Alon--Tarsi number $AT(G)$ of a graph $G$, defined via the graph polynomial, is a strengthening of the list chromatic number $\chi_{\ell}(G)$. We study the Alon--Tarsi number of squares of planar graphs. The \emph{square} of a graph $G$ is the graph obtained by joining every pair of vertices whose distance in $G$ is at most $2$. Recently, Kim and Luo (2026) proved that $\chi_{\ell}(G^2)\le 6$ for every subcubic planar graph containing no $k$-cycles for $4\le k\le 8$. We strengthen this result by proving that $AT(G^2)\le 6$ for every such graph $G$.

\medskip

\noindent\textbf{Key words.} planar graph, list coloring, square of graph, Combinatorial Nullstellensatz, Alon-Tarsi number

\end{abstract}

\section{Introduction}

The Alon--Tarsi method plays a fundamental role in graph coloring. It provides an algebraic approach to bounding the list chromatic number by identifying monomials with nonzero coefficients in the graph polynomial. More importantly, the resulting parameter, called the \emph{Alon--Tarsi number}, is stronger than the list chromatic number. Consequently, an upper bound on the Alon--Tarsi number not only yields the same bound on the list chromatic number but also demonstrates that the bound can be established using algebraic techniques.

We begin by introducing the necessary definitions. Given a graph $G$, a \emph{list assignment} $L$ assigns to each vertex a list of colors. The graph $G$ is \emph{$L$-colorable} if it admits a proper coloring $f$ such that $f(v)\in L(v)$ for every vertex $v$. If $G$ is $L$-colorable whenever every list has size at least $k$, then $G$ is said to be \emph{$k$-choosable}. The \emph{list chromatic number} $\chi_{\ell}(G)$ is the smallest integer $k$ for which $G$ is $k$-choosable.

A stronger notion than list coloring is Alon--Tarsi coloring, introduced through the graph polynomial in \cite{JT95}. Let $G$ be a graph, and fix an arbitrary ordering `$<$' of its vertices. The \emph{graph polynomial} of $G$ is defined by
\[
P_G(\bm{x})=\prod_{u\sim v,\;u<v}(x_u-x_v),
\]
where $u\sim v$ indicates that $u$ and $v$ are adjacent, and $\bm{x}=(x_v)_{v\in V(G)}$ is the vector of variables indexed by the vertices of $G$. Observe that $P_G(\bm{x})$ is a homogeneous polynomial of degree $|E(G)|$.

Let $f:V(G)\rightarrow\{0,1,2,\ldots\}$ be a function. We say that $G$ is \emph{Alon--Tarsi $f$-choosable} if $P_G(\bm{x})$ contains a monomial
\[
\prod_{v\in V(G)}x_v^{t_v}
\]
with nonzero coefficient such that $t_v\le f(v)-1$ for every $v\in V(G)$. When $f(v)=k$ for all $v\in V(G)$, we simply say that $G$ is \emph{Alon--Tarsi $k$-choosable}. The \emph{Alon--Tarsi number} of $G$ is defined by
\[
AT(G)=\min\{k:\text{$G$ is Alon--Tarsi $k$-choosable}\}.
\]

The celebrated Combinatorial Nullstellensatz is stated as follows.

\begin{theorem}[\cite{AT92}]\label{cnull}
\textnormal{(Combinatorial Nullstellensatz)}
Let $\mathbb{F}$ be a field, and let
$f(x_1,\ldots,x_n)\in\mathbb{F}[x_1,\ldots,x_n]$.
Suppose $\deg(f)=\sum_{i=1}^n t_i$, where each $t_i\ge0$, and the coefficient of
$\prod_{i=1}^n x_i^{t_i}$ is nonzero.
If $S_1,\ldots,S_n\subseteq\mathbb{F}$ satisfy $|S_i|\ge t_i+1$ for every $i$,
then there exist $s_i\in S_i$ such that
$f(s_1,\ldots,s_n)\neq0$.
\end{theorem}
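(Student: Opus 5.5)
We prove the Combinatorial Nullstellensatz by first establishing the polynomial-vanishing lemma and then reducing $f$ modulo the ideal generated by the polynomials $g_i(x_i)=\prod_{s\in S_i}(x_i-s)$. Without loss of generality we shrink each $S_i$ so that $|S_i|=t_i+1$ exactly; finding a non-root in the smaller grid suffices.

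\emph{Step 1 (vanishing lemma).} We show by induction on $n$: if $h\in\mathbb{F}[x_1,\ldots,x_n]$ has $\deg_{x_i}h\le t_i$ for every $i$ and $h$ vanishes on $S_1\times\cdots\times S_n$ with $|S_i|=t_i+1$, then $h\equiv0$. For $n=1$ this is the fact that a nonzero univariate polynomial of degree at most $t_1$ has at most $t_1$ roots. For the inductive step, write $h=\sum_{j=0}^{t_n}h_j(x_1,\ldots,x_{n-1})x_n^j$. For each fixed $(s_1,\ldots,s_{n-1})\in S_1\times\cdots\times S_{n-1}$, the univariate polynomial in $x_n$ vanishes on the $t_n+1$ points of $S_n$, so every $h_j(s_1,\ldots,s_{n-1})=0$. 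By induction each $h_j\equiv0$, hence $h\equiv 0$.

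\emph{Step 2 (reduction).} Suppose for contradiction that $f$ vanishes on the whole grid $S_1\times\cdots\times S_n$. Write $g_i(x_i)=x_i^{t_i+1}-\sum_{j\le t_i}c_{ij}x_i^j$. On the grid we have $x_i^{t_i+1}=\sum_j c_{ij}x_i^j$. Repeatedly replace any occurrence of $x_i^{m}$ with $m>t_i$ in $f$ by $x_i^{m-t_i-1}\sum_j c_{ij}x_i^j$. This yields a polynomial $\bar f$ with $\deg_{x_i}\bar f\le t_i$ that agrees with $f$ on the grid. Hence $\bar f$ vanishes on the grid, and by Step 1, $\bar f\equiv0$.

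\emph{Step 3 (tracking the coefficient).} The key observation is that each replacement step turns a monomial into a combination of monomials of strictly smaller total degree. Any monomial of $f$ of total degree $\deg f=\sum t_i$ other than $\prod x_i^{t_i}$ must have some exponent exceeding the corresponding $t_i$, since the exponents sum to $\sum t_i$ but are not all equal to the $t_i$. Such monomials are therefore reduced, and so contribute only to degrees below $\sum t_i$. The monomial $\prod x_i^{t_i}$ itself is untouched, and no lower-degree monomial can produce it. Consequently the coefficient of $\prod x_i^{t_i}$ in $\bar f$ equals its coefficient in $f$, which is nonzero. This contradicts $\bar f\equiv0$, so some point $(s_1,\ldots,s_n)$ of the grid satisfies $f(s_1,\ldots,s_n)\neq0$.

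The main care point is the bookkeeping in Step 3: we must verify that the degree strictly drops under each replacement and that the top-degree target monomial can be neither created nor destroyed. Everything else is routine.
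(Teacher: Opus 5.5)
Your proof is correct. The paper does not prove this statement at all; it quotes the Combinatorial Nullstellensatz from the literature and uses it only to justify $\chi_\ell(G)\le AT(G)$, so there is no route of the paper's to compare against.

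What you wrote is essentially Alon's standard argument: a vanishing lemma on grids, reduction of $f$ modulo $g_i(x_i)=\prod_{s\in S_i}(x_i-s)$, and comparison of the coefficient of $\prod_i x_i^{t_i}$. The only difference is packaging. The usual presentation first writes $f=\sum_i h_i g_i$ with $\deg h_i\le \deg f-\deg g_i$, then observes that every monomial of degree $\deg f$ in $h_i g_i$ is divisible by $x_i^{t_i+1}$. You instead track the target coefficient directly through the rewriting, and the two are equivalent.

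One detail you leave implicit is that the repeated rewriting terminates. The cleanest way to make this immediate is to replace each pure power $x_i^{a}$ with $a>t_i$ once, by its remainder upon univariate division by $g_i$. That remainder is a polynomial in $x_i$ of degree at most $t_i<a$ and agrees with $x_i^a$ on $S_i$. This gives both termination and the strict drop in total degree for every monomial having some exponent above $t_i$.
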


Theorem~\ref{cnull} immediately implies that $\chi_{\ell}(G)\le AT(G)$ for every graph $G$. Therefore, any upper bound on $AT(G)$ yields the corresponding upper bound on $\chi_{\ell}(G)$. For example, Thomassen \cite{Thomassen-94} proved that every planar graph is $5$-choosable. Later, Zhu \cite{Zhu-AT} established the stronger result that $AT(G)\le5$ for every planar graph $G$, which immediately implies Thomassen's theorem.

In this paper, we study the Alon--Tarsi number of squares of subcubic planar graphs. The \emph{square} of a graph $G$, denoted by $G^2$, is obtained by joining every pair of vertices whose distance in $G$ is at most $2$, while keeping the vertex set unchanged.

Coloring squares of graphs has received considerable attention following Wegner's conjecture, which proposes upper bounds on $\chi(G^2)$ in terms of the maximum degree of a planar graph $G$.
 For further results related to Wegner's conjecture, we refer the reader to \cite{Cranston22, Hartke, JKK, Thomassen}.

A graph is \emph{subcubic} if its maximum degree is at most $3$, and a cycle of length $k$ is called a \emph{$k$-cycle}. Recently, Kim and Luo \cite{KLuo25} proved the following theorem.

\begin{theorem}\label{thm-KLuo25}
If $G$ is a subcubic planar graph containing no $k$-cycles for $4\le k\le8$, then $\chi_{\ell}(G^2)\le6$.
\end{theorem}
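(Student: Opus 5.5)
The statement is Theorem~\ref{thm-KLuo25} of Kim and Luo, cited here from \cite{KLuo25}. Within this paper it is taken as known, and it will in fact follow from the main result $AT(G^2)\le 6$ via Theorem~\ref{cnull}. If I had to prove it directly, I would use the discharging-plus-reducibility framework that is standard for list colouring squares of sparse planar graphs. The same framework will be reused for the Alon--Tarsi strengthening.

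\textbf{Minimal counterexample.} Take $G$ to be a counterexample minimising $|V(G)|+|E(G)|$, together with a 6-list assignment $L$ such that $G^2$ is not $L$-colourable. First I would establish that $G$ is connected, has minimum degree at least $2$, and contains no $2$-vertex adjacent to another $2$-vertex. Each of these follows from a short greedy argument. A vertex $v$ of degree at most $2$ in a subcubic graph has at most $2+2\cdot 2=6$ neighbours in $G^2$. After deleting suitable vertices or edges and colouring the smaller graph by minimality, one must check that the square distances between the remaining vertices are not reduced; then at most $5$ colours are forbidden at $v$ and it can be coloured last.

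\textbf{Reducible configurations.} The central configurations involve the $3$-faces, since $G$ has no cycles of length $4$ to $8$, so every other face has length at least $9$. I would show the following are reducible:
\begin{itemize}
\item a triangle incident to a $2$-vertex;
\item a triangle sharing a vertex with, or adjacent to, another triangle (two triangles sharing an edge would create a $4$-cycle, so this case is automatically excluded);
\item certain short paths of $2$-vertices along a $9^+$-face, and $2$-vertices near triangles.
\end{itemize}
For each configuration $H$, colour $G^2-V(H)$ by minimality. The remaining vertices then have residual lists whose sizes exceed their degrees in $G^2[H]$ by small amounts, and one verifies that $G^2[H]$ is colourable from these residual lists. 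That verification can be done by a Hall-type argument or by ordering the vertices greedily with one "flexible" vertex coloured last.

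\textbf{Discharging.} Assign charge $2d(v)-6$ to each vertex and $\ell(f)-6$ to each face. By Euler's formula the total charge is $-12$. Only $2$-vertices (charge $-2$) and $3$-faces (charge $-3$) are negative. Each $9^+$-face must send charge to the triangles adjacent to it and to its incident $2$-vertices. A face of length $\ell$ has charge $\ell-6\ge \ell/3$ when $\ell\ge 9$, so sending roughly $1$ per adjacent triangle edge and $1$ per incident $2$-vertex works, provided the reducible configurations limit how densely these objects can appear around the face.

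\textbf{Main obstacle.} The hardest step is the colourability check for the configurations that contain triangles. In a triangle, each vertex's square neighbourhood is large, so the slack in its list is only one or two colours. I would first try ordering those vertices so that one vertex retains two extra colours and is coloured last. If that fails, I would instead use a small Alon--Tarsi computation on $G^2[H]$, which conveniently also serves the paper's main theorem.
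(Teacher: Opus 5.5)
Your opening remark is correct, and it is all the paper itself does with this statement. The result is quoted from Kim--Luo, and it also follows from Theorem~\ref{main-thm}, since $\chi_\ell(G^2)\le AT(G^2)$ by Theorem~\ref{cnull}.

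\textbf{The gap in the direct sketch.} Your sketch sets up the same discharging as the paper: charges $2d(v)-6$ and $d(f)-6$, with each $9^+$-face sending $1$ to each incident $2$-vertex and each adjacent triangle. But it defers the one statement that carries the whole proof, $t(f)\le d(f)-6$ for every $9^+$-face (Claim~\ref{claim-main}), to ``provided the reducible configurations limit how densely these objects can appear''. The configurations you propose cannot deliver that bound. Local configurations of the kind you list are: no adjacent $2$-vertices, no $2$-vertex on a triangle, and $T_1$--$T_3$ of Claim~\ref{cl:reducible}. These give only $t(C)\le d(C)-\lceil d(C)/2\rceil$ (Claim~\ref{lemma-C11}). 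That bound handles $d(f)\ge 11$, but it leaves exactly one unit too many on $9$- and $10$-faces. Consider three examples:
\begin{itemize}
\item a $10$-cycle with triangles on five alternating edges ($H_1$), where $t=5>4$;
\item a $10$-cycle with alternating $2$-vertices ($H_4$), where $t=5>4$;
\item a $9$-cycle carrying four triangles ($F_{12}$), where $t=4>3$.
\end{itemize}
None of them contains any of those local configurations. The missing idea is to reduce the \emph{entire} $9$- or $10$-cycle together with its attached triangles and $2$-vertices. These are $H_1$--$H_4$ and $F_1$--$F_{12}$ (Claims~\ref{lem-H234} and~\ref{claim-F-one}). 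A further case analysis (Kim--Luo) then shows that excluding them forces $t\le 3$ on $9$-faces and $t\le 4$ on $10$-faces. Your inequality $\ell-6\ge \ell/3$ also points at the wrong target. What is needed is $t(f)\le \ell-6$, and density $1/3$ is required only when $\ell=9$.

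\textbf{The main obstacle is elsewhere.} The triangle configurations are small and easy. The hard cases are these whole-cycle configurations, which have essentially no slack. In $W_1\subset H_1$, for instance, every vertex has degree exactly $f(v)+1$ in $W_1^2$. So greedy colouring with one flexible vertex last is impossible, and one needs a global argument. The paper supplies it through nonvanishing graph-polynomial coefficients.

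\textbf{Two smaller slips.}
\begin{itemize}
\item A $2$-vertex with two $3$-neighbours has six square-neighbours and is \emph{not} reducible; that is why $2$-vertices survive into the discharging. Your count of ``at most $5$ forbidden colours'' holds only for a $1$-vertex, a $2$-vertex on a triangle, or a pair of adjacent $2$-vertices deleted together. Deleting only one of the pair would destroy the distance-$2$ relation between its neighbours.
\item Your triangle--triangle bullet is vacuous in a subcubic graph. Two triangles sharing exactly one vertex would need a $4$-vertex, and sharing an edge gives a $4$-cycle. The triangle configurations actually needed, $T_1$--$T_3$, all involve $2$-vertices.
\end{itemize}
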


Since $\chi_{\ell}(G^2)\le AT(G^2)$ for every graph $G$, Theorem~\ref{thm-KLuo25} naturally raises the question of whether the stronger conclusion also holds for the Alon--Tarsi number. In this paper, we answer this question affirmatively.

\begin{theorem}\label{main-thm}
If $G$ is a subcubic planar graph containing no $k$-cycles for $4\le k\le8$, then $AT(G^2)\le6$.
\end{theorem}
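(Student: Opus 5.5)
We argue by a minimal counterexample combined with discharging, following the structure of the proof of Theorem~\ref{thm-KLuo25} but replacing each list-colouring reducibility argument by an Alon--Tarsi one. The basic tool is the standard orientation reformulation of the graph polynomial, due to Alon and Tarsi. $H$ is Alon--Tarsi $f$-choosable if $H$ has an orientation $D$ with $d^+_D(v)\le f(v)-1$ for all $v$ such that the numbers of even and odd Eulerian subdigraphs of $D$ differ.

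From this we get the key gluing lemma. Suppose $H'$ is an induced subgraph of $H$ with $H-V(H')$ Alon--Tarsi $k$-choosable. Suppose also that $H'$ is Alon--Tarsi $f$-choosable, where $f(v)=k-|N_H(v)\setminus V(H')|$. Then $H$ is Alon--Tarsi $k$-choosable. To prove it, orient all edges between the two parts toward $H-V(H')$. No Eulerian subdigraph uses these edges, so the Eulerian counts multiply. The outdegrees stay within bounds because each $v\in V(H')$ gains exactly $|N_H(v)\setminus V(H')|$ out-arcs.

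Let $G$ be a counterexample minimising $|V(G)|+|E(G)|$. Then $G$ is connected and has minimum degree at least $2$; a vertex $v$ of degree $\le1$ has $\deg_{G^2}(v)\le3$, so the gluing lemma with $H'=\{v\}$ disposes of it.

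The reducible configurations we expect mirror those used for Theorem~\ref{thm-KLuo25}:
\begin{itemize}
\item adjacent $2$-vertices;
\item a $2$-vertex on a $3$-cycle;
\item a $3$-cycle with restricted neighbourhood;
\item short paths or threads of $3$-vertices joined to $2$-vertices;
\item faces of length $9$ with many $2$-vertices.
\end{itemize}
The cycle restrictions ($3$-cycles are pairwise far apart, and no $4$- to $8$-cycles) keep these configurations small and rigid. For each configuration $C$ we delete some vertices or edges to get a smaller graph $G'$. We must check that $G'$ still satisfies the hypotheses (subgraphs do) and that $G'^2\supseteq G^2-V(C)$, which holds because distance-$2$ pairs outside $C$ survive when we are careful, for example by contracting or adding a path. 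By minimality $G^2-V(C)$ is Alon--Tarsi $6$-choosable. We then compute $f(v)=6-|N_{G^2}(v)\setminus V(C)|$ on $C$ and show $G^2[V(C)]$ is Alon--Tarsi $f$-choosable. Where $G'^2$ is not a subgraph of $G^2$, we first delete the extra edges; removing edges can only decrease outdegrees in the orientation, but it can alter the Eulerian counts. So we phrase minimality for the induced subgraph $G^2-V(C)$ directly, choosing $C$ so that this is the square of a valid smaller graph.

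The main obstacle is this local step, proving $G^2[V(C)]$ is Alon--Tarsi $f$-choosable with nearly tight $f$. Greedy degeneracy arguments suffice when some vertex has slack: an acyclic orientation has exactly one Eulerian subdigraph, the empty one. The tight configurations, where list colouring used Hall-type or Kernel-type arguments, need more. For those I would first try to find an orientation with few directed cycles, or one in which the Eulerian subdigraphs can be enumerated by hand. Failing that, I would compute the coefficient of the target monomial in the small polynomial, via symbolic computation or an expansion through the edges of the bounded structure; each such $C$ has at most about ten vertices. Finally, discharging with initial charge $\mu(v)=2d(v)-6$ and $\mu(f)=d(f)-6$ has total $-12$ by Euler's formula. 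Every face has length $3$ or at least $9$, and the reducible configurations above give a nonnegative final charge everywhere, which is a contradiction. This part can be reused verbatim from \cite{KLuo25} provided our reducible list matches theirs.
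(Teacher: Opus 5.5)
\textbf{Verdict: genuine gap.} Your architecture is the paper's: the Lu--Zhu gluing (all arcs between the removed part and the rest point outward, so the Eulerian counts multiply), a minimal counterexample, and reuse of the Kim--Luo discharging with charges $2d(v)-6$ and $d(f)-6$. The step that carries all the content is never established, namely that every configuration the discharging needs is Alon--Tarsi reducible. Your list of configurations is also not the one needed. The discharging (each $9^+$-face sends $1$ to each incident $2$-vertex and to each adjacent triangle) requires $t(f)\le d(f)-6$ for every $9^+$-face. Here $t(f)$ counts the $2$-vertices on $f$ \emph{and} the triangles sharing an edge with $f$. This bound comes from three groups of configurations.
\begin{itemize}
\item Three triangle configurations $T_1,T_2,T_3$: a triangle edge flanked by two $2$-vertices; two triangles joined by an edge, followed by a $2$-vertex; a triangle followed by a $2$-vertex, a $3$-vertex and a $2$-vertex. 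These give $t(C)\le d(C)-\lceil d(C)/2\rceil$, which settles $11^+$-faces.
\item Four configurations $H_1$--$H_4$ on $10$-cycles with $t=5$.
\item Twelve configurations $F_1$--$F_{12}$ on $9$-cycles with $t=4$. All but one of these involve triangles hanging on the cycle, not only $2$-vertices.
\end{itemize}
Your list omits $10$-faces entirely, and a $10$-face with $t(f)=5$ would end with charge $-1$. Your remark that triangles are pairwise far apart is also false: $T_2$ is two triangles joined by an edge.

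More importantly, list-colouring reducibility in Kim--Luo says nothing about Alon--Tarsi reducibility. So ``the configurations mirror those of Kim--Luo'' is exactly what must be proved, not an input. The paper proves it one configuration at a time:
\begin{itemize}
\item it fixes a removed set $S$ with $G^2-S=(G-S)^2$;
\item it computes $f(v)=6-|N_{G^2}(v)\setminus S|$;
\item it exhibits a monomial in the graph polynomial of the square of $S$ with every exponent at most $f(v)-1$ and nonzero coefficient. For example, $x_1x_2x_3^3x_4^2x_5^2$ has coefficient $-1$ in the $T_1$ case.
\end{itemize}
For $T_2$, $T_3$, $H_1$ and $H_2$, the removed set is a carefully chosen proper part of the configuration, which changes $f$. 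Finding these sets and monomials is the real work. Your proposal only says you ``would compute'' such coefficients. Until each is shown nonzero there is no proof, and one failure would break the verbatim reuse of the discharging.

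A minor point: your worry that deleting edges can destroy the Alon--Tarsi property is unfounded at the level of the number. Since $P_G=(x_u-x_v)P_{G-e}$, a monomial with nonzero coefficient in $P_G$, divided by $x_u$ or by $x_v$, gives one with nonzero coefficient in $P_{G-e}$. Hence $AT$ is monotone under subgraphs.
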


\medskip

The proof combines the structural framework developed by Kim and Luo \cite{KLuo25} with new reducibility arguments for the Alon--Tarsi number. In Section~\ref{sec:prelim}, we first introduce the structural tools needed for studying the Alon--Tarsi number and the relevant reducible configurations. We then prove Theorem~\ref{main-thm} assuming the reducibility of these configurations. The reducibility proofs are presented in Section~\ref{sec:reducibility}.

 
 \section{Alon--Tarsi tools and the proof of Theorem~\ref{main-thm}}
 \label{sec:prelim}

In this section, we introduce an alternative characterization of the Alon--Tarsi number and establish the reducibility of the configurations arising in the structural analysis of Kim and Luo~\cite{KLuo25}.

\subsection{An alternative characterization of the Alon--Tarsi number}\label{sec:def}

We first present an alternative characterization of the Alon--Tarsi number in terms of graph orientations.

A digraph $D$ is \emph{Eulerian} if $d_D^{+}(v)=d_D^{-}(v)$ for every vertex $v$, where $d_D^{+}(v)$ and $d_D^{-}(v)$ denote the outdegree and indegree of $v$ in $D$, respectively. An Eulerian digraph is \emph{even} if it has an even number of edges, and \emph{odd} otherwise. For a digraph $D$, let $EE(D)$ and $EO(D)$ denote the sets of even and odd spanning Eulerian subdigraphs of $D$, respectively. An orientation $D$ of a graph is called an \emph{Alon--Tarsi orientation} (or simply an \emph{AT-orientation}) if
\[
|EE(D)|\neq |EO(D)|.
\]

Alon and Tarsi~\cite{AT92} established the following connection between graph-polynomial coefficients and Eulerian subdigraphs.

\begin{theorem}[\cite{AT92}]
Let $D$ be an orientation of a graph $G$, and let $t_v=d_D^{+}(v)$ for every vertex $v\in V(G)$. Then the absolute value of the coefficient of the monomial
\[
\prod_{v\in V(G)}x_v^{t_v}
\]
in the expansion of $P_G(\bm{x})$ is equal to
\[
\bigl||EE(D)|-|EO(D)|\bigr|.
\]
\end{theorem}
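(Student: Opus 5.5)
The plan is to expand the product $P_G(\bm{x})=\prod_{u\sim v,\,u<v}(x_u-x_v)$ and to interpret each term combinatorially as an orientation of $G$. Choosing $x_u$ or $-x_v$ from the factor of the edge $uv$ (with $u<v$) amounts to orienting that edge toward the other endpoint, i.e.\ $u\to v$ if we pick $x_u$ (tail contributes the variable) and $v\to u$ if we pick $-x_v$. The term produced is $\pm\prod_v x_v^{d^+(v)}$, with sign $(-1)^{\#\text{backward arcs}}$, where an arc is \emph{backward} if it goes from the larger to the smaller vertex. Hence the coefficient of $\prod_v x_v^{t_v}$ equals $\sum_{D'}(-1)^{b(D')}$ over all orientations $D'$ of $G$ with outdegree sequence $(t_v)$, where $b(D')$ denotes the number of backward arcs of $D'$.

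Next I would set up a bijection between such orientations $D'$ and spanning subdigraphs of the fixed orientation $D$. Given $D'$, let $E(D')$ be the set of arcs of $D$ that are reversed in $D'$. Since $D$ and $D'$ have the same outdegree at every vertex, reversing the arcs of a subset $H$ of $D$ preserves all outdegrees if and only if $d_H^+(v)=d_H^-(v)$ for each $v$, i.e.\ $H$ is a spanning Eulerian subdigraph of $D$. Thus $D'\mapsto H=D\triangle D'$ is a bijection between orientations with outdegrees $(t_v)$ and spanning Eulerian subdigraphs of $D$ (including the empty one).

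Finally, I would compare signs. Each reversed arc flips its backward status, so $b(D')\equiv b(D)+|E(H)|\pmod 2$. The coefficient therefore equals $(-1)^{b(D)}\sum_{H}(-1)^{|E(H)|}=(-1)^{b(D)}\bigl(|EE(D)|-|EO(D)|\bigr)$, and taking absolute values gives the claim. The only delicate point is the bookkeeping in the first step: fixing the correspondence between the choice of a variable and the tail of the arc, so that the exponent of $x_v$ is exactly $d^+(v)$. The rest is a direct parity count, and I expect no real obstacle.
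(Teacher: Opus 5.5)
Your argument is correct and complete. You expand $P_G(\bm{x})$ as a signed sum over orientations with outdegree sequence $(t_v)$, match these bijectively with spanning Eulerian subdigraphs $H$ of $D$ by arc reversal, and use $b(D')\equiv b(D)+|E(H)|\pmod 2$; this is the original argument of Alon and Tarsi~\cite{AT92}, which the paper cites without reproducing, so you are following the same standard route.
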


The above theorem immediately implies that, for a function
$f:V(G)\rightarrow\{0,1,2,\ldots\}$,
the graph polynomial $P_G(\bm{x})$ contains a monomial
\[
\prod_{v\in V(G)}x_v^{t_v}
\]
with nonzero coefficient satisfying $t_v\le f(v)-1$ for every $v\in V(G)$ if and only if $G$ admits an AT-orientation $D$ with
$d_D^{+}(v)\le f(v)-1$ for every vertex $v\in V(G)$. Consequently, the Alon--Tarsi number can be characterized as
\[
AT(G)=\min\left\{k:\text{$G$ admits an AT-orientation $D$ with }
d_D^{+}(v)\le k-1 \text{ for every }v\in V(G)\right\}.
\]

We next state a lemma due to Lu and Zhu~\cite{LZ19}, which will play a key role in proving the reducibility of configurations. For completeness, we include its proof.

\begin{lemma}[\cite{LZ19}] \label{key-AT-configuration}
Assume that $D$ is a digraph with $V(D)=X\cup Y$ and $X\cap Y=\emptyset$. If every arc between $X$ and $Y$ is oriented from $X$ to $Y$, then $D$ is an AT-orientation if and only if both $D[X]$ and $D[Y]$ are AT-orientations.
\end{lemma}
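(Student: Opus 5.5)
The plan is to prove that spanning Eulerian subdigraphs of $D$ split as products: every spanning Eulerian subdigraph $H$ of $D$ contains no arc between $X$ and $Y$. Given this, $H$ is determined by the pair $(H[X],H[Y])$, and each part is itself a spanning Eulerian subdigraph of $D[X]$ or $D[Y]$, respectively.

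For the key structural claim, let $H$ be a spanning Eulerian subdigraph of $D$ and count arcs of $H$ crossing the cut. Summing $d_H^+(v)-d_H^-(v)=0$ over $v\in X$ gives zero. Arcs with both ends in $X$ contribute $+1$ and $-1$ and so cancel. Each arc from $X$ to $Y$ contributes $+1$, and by hypothesis there are no arcs from $Y$ to $X$. Hence the number of arcs of $H$ from $X$ to $Y$ is zero. (Equivalently, every arc of an Eulerian digraph lies on a directed cycle, and no directed cycle can cross a cut that is oriented one way.) Consequently $H=H[X]\cup H[Y]$, and each of $H[X]$ and $H[Y]$ is Eulerian, since degrees inside each part are unchanged.

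Conversely, if $H_X$ and $H_Y$ are spanning Eulerian subdigraphs of $D[X]$ and $D[Y]$, their disjoint union is a spanning Eulerian subdigraph of $D$. This gives a bijection
\[
\mathcal{E}(D)\to\mathcal{E}(D[X])\times\mathcal{E}(D[Y]),
\]
where $\mathcal{E}$ denotes the set of spanning Eulerian subdigraphs. Under it, the number of edges adds, so parity is additive. Writing $e(\cdot)=|EE(\cdot)|-|EO(\cdot)|$, which is the signed sum $\sum(-1)^{|E(H)|}$, we get
\[
e(D)=e(D[X])\cdot e(D[Y]).
\]
Then $e(D)\neq 0$ if and only if both factors are nonzero, which is exactly the statement.

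The only real step is the cut-counting argument that Eulerian subdigraphs avoid the cut. It is elementary, so I expect no serious obstacle; the rest is bookkeeping with the signed count.
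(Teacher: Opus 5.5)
Your proposal is correct and follows the paper's proof. Both show that no arc from $X$ to $Y$ lies in any Eulerian subdigraph, so spanning Eulerian subdigraphs of $D$ split into pairs from $D[X]$ and $D[Y]$ with additive parity, which gives $|EE(D)|-|EO(D)|=\bigl(|EE(D[X])|-|EO(D[X])|\bigr)\bigl(|EE(D[Y])|-|EO(D[Y])|\bigr)$. Your degree-sum argument over $X$ is simply a direct justification of the paper's remark that these arcs lie on no directed cycle.
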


\begin{proof}
Let $D_1=D[X]$ and $D_2=D[Y]$. Since every arc between $X$ and $Y$ is oriented from $X$ to $Y$, none of these arcs belongs to a directed cycle. Consequently, none of them is contained in any Eulerian subdigraph of $D$.

It follows that every Eulerian subdigraph $H$ of $D$ is the arc-disjoint union of an Eulerian subdigraph $H_1$ of $D_1$ and an Eulerian subdigraph $H_2$ of $D_2$. Moreover, $H$ is even if and only if $H_1$ and $H_2$ have the same parity. Therefore,
\begin{eqnarray*}
|EE(D)| &=& |EE(D_1)|\,|EE(D_2)|
          + |EO(D_1)|\,|EO(D_2)|,\\
|EO(D)| &=& |EE(D_1)|\,|EO(D_2)|
          + |EO(D_1)|\,|EE(D_2)|.
\end{eqnarray*}
Hence,
\[
|EE(D)|-|EO(D)|
=
\bigl(|EE(D_1)|-|EO(D_1)|\bigr)
\bigl(|EE(D_2)|-|EO(D_2)|\bigr).
\]

Therefore,
\[
|EE(D)|-|EO(D)|\neq0
\]
if and only if
\[
|EE(D_1)|-|EO(D_1)|\neq0
\quad\text{and}\quad
|EE(D_2)|-|EO(D_2)|\neq0.
\]
The conclusion follows.
\end{proof}

\subsection{Proof of Theorem~\ref{main-thm}}
\label{section-redu}

We prove Theorem~\ref{main-thm} by contradiction. Let $G$ be a minimal counterexample; that is,
$AT(G^2)>6$, whereas $AT(H^2)\le6$ for every proper subgraph $H$ of $G$.
We further assume that $G$ is embedded in the plane. Clearly, $G$ is connected.

The following claim is a direct consequence of Lemma~\ref{key-AT-configuration} and will be used repeatedly in proving the reducibility of configurations.

\begin{claim}\label{key-remark}
Let $G_1$ be an induced subgraph of $G$, and define
\[
f(v)=6-t_v,
\]
where
\[
t_v=\bigl|N_{G^2}(v)\setminus V(G_1)\bigr|
\]
for each $v\in V(G_1)$. Then $G_1^2$ is not Alon--Tarsi $f$-choosable.
\end{claim}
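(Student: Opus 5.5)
The plan is to argue by contradiction: from an Alon--Tarsi $f$-orientation of $G_1^2$ I will build an AT-orientation of $G^2$ with all outdegrees at most $5$, contradicting the choice of $G$. We may assume $G_1$ has at least one edge and $V(G_1)\neq V(G)$; these are the cases in which the claim is used. Put $X=V(G_1)$ and $Y=V(G)\setminus X$. The orientation of $G^2$ will be assembled from three pieces: an orientation of $G^2[X]$, an orientation of $G^2[Y]$, and all $X$--$Y$ arcs directed from $X$ to $Y$. Lemma~\ref{key-AT-configuration} then says the whole orientation is AT exactly when the orientations of $G^2[X]$ and $G^2[Y]$ are AT.

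\emph{The $Y$ side.} Let $H$ be obtained from $G$ by deleting the edges of $G_1$ and keeping all vertices. Since $G_1$ has an edge, $H$ is a proper subgraph of $G$, and it is still subcubic, planar and free of $k$-cycles for $4\le k\le 8$. By minimality, $AT(H^2)\le 6$. Now $G^2[Y]=H^2[Y]$: an edge of $G$ inside $Y$ survives in $H$, and a path $u w v$ with $u,v\in Y$ uses edges $uw,wv$, neither of which lies inside $X$, so it also survives in $H$. The Alon--Tarsi number does not increase under taking subgraphs: if a monomial of $P_{H^2}$ with all exponents at most $5$ has nonzero coefficient, then dividing out the factors of the deleted edges and comparing leading terms gives such a monomial for the subgraph. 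Hence $G^2[Y]$ has an AT-orientation $D_2$ with every outdegree at most $5$. Vertices of $Y$ receive no further out-arcs, since the cross arcs all point into $Y$.

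\emph{The $X$ side.} Suppose, contrary to the claim, that $G_1^2$ is Alon--Tarsi $f$-choosable. Then it has an AT-orientation $D_1$ with $d^+_{D_1}(v)\le f(v)-1=5-t_v$ for each $v\in X$. Each such $v$ also gets exactly $t_v=|N_{G^2}(v)\cap Y|$ out-arcs into $Y$. So its total outdegree is at most $5$, and Lemma~\ref{key-AT-configuration} yields an AT-orientation of $G^2$ with maximum outdegree at most $5$. That gives $AT(G^2)\le 6$, a contradiction.

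The main obstacle is the identification of $G^2[X]$ with $G_1^2$. Because $G_1$ is induced, the two graphs have the same distance-$1$ edges. However, $G^2[X]$ may contain an extra edge $uv$ when $u,v\in X$ have a common neighbour in $Y$. Such an edge is counted neither in $G_1^2$ nor in $t_u$ or $t_v$. I would first try to dispose of these edges directly: in the configurations of Kim and Luo the set $X$ is such that no vertex of $Y$ has two neighbours in $X$ at distance greater than $2$ in $G_1$, so that $G^2[X]=G_1^2$. Failing that, I would read $G_1^2$ in the claim as $G^2[V(G_1)]$, which is what the reducibility arguments actually require; with that reading the argument above goes through verbatim. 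The choice of $H$ is the other point that needs care: $H$ must keep every $2$-path through $X$ while remaining a proper subgraph of $G$, which is why it is taken to be $G$ minus $E(G_1)$ rather than $G-X$.
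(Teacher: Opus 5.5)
There is one concrete gap. You set aside edgeless $G_1$ on the grounds that the claim is only used when $G_1$ has an edge. That is not the case: the paper applies Claim~\ref{key-remark} with $G_1=\{x\}$ in Claim~\ref{CL:2-vertex}(1) and~(2). There $H=G-E(G_1)=G$, so minimality gives nothing, and you have no orientation of $G^2[Y]$.

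These cases need a separate check, and either of two routes works:
\begin{enumerate}[(i)]
\item Follow the paper and delete $x$. Then $G^2[Y]=(G-x)^2$, because a $1$-vertex has a single neighbour and the two neighbours of a $2$-vertex on a triangle are already adjacent.
\item In the spirit of your construction, delete one edge $xy$ at $x$. In both cases this loses no edge of $G^2[Y]$.
\end{enumerate}
The check genuinely depends on the instance. For a $2$-vertex $x$ with non-adjacent neighbours $y,z$, the edge $yz$ lies in $G^2[Y]$ but not in $(G-x)^2$.

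Apart from this, your argument works, and on the $Y$ side it takes a different route from the paper's. The paper sets $G_2=G-V(G_1)$, gets $D_2$ on $G_2^2$ by minimality, and simply ``combines'' $D_1$ and $D_2$. This tacitly assumes both $G^2[V(G_2)]=G_2^2$ and $G^2[V(G_1)]=G_1^2$. The first is checked only in passing in Section~\ref{sec:reducibility}, via the remark that no vertex of $H_i$ has two neighbours outside $H_i$.

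Deleting $E(G_1)$ rather than $V(G_1)$ makes $G^2[Y]=H^2[Y]$ automatic. The price is that you need monotonicity of $AT$ under subgraphs. Justify it by removing one edge $ab$ at a time: the coefficient of $\prod_v x_v^{t_v}$ in $P\cdot(x_a-x_b)$ is the difference of the coefficients in $P$ of the monomials obtained by lowering $t_a$, respectively $t_b$, by one. Hence one of these is nonzero and has exponents at most the original ones. ``Comparing leading terms'' does not control a non-leading monomial, so that justification does not work as stated.

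On the $X$ side you correctly isolate the hidden hypothesis $G^2[V(G_1)]=G_1^2$. Only your first remedy actually helps. Under the reinterpretation, the applications would have to show that $G^2[V(G_1)]$, not $G_1^2$, is Alon--Tarsi $f$-choosable, and that does not follow from the smaller graph. The first remedy is easy to justify. If $w\in Y$ is a common neighbour of $u,v\in V(G_1)$ with $3\le d_{G_1}(u,v)\le 6$, then a shortest $u$--$v$ path in $G_1$ together with $uwv$ is a cycle of length between $5$ and $8$, which is forbidden. Every $G_1$ used in the paper has diameter at most $6$.
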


\begin{proof}
Suppose, to the contrary, that $G_1^2$ is Alon--Tarsi $f$-choosable. Then $G_1^2$ admits an AT-orientation $D_1$ satisfying
\[
d_{D_1}^+(v)\le f(v)-1=5-t_v
\]
for every $v\in V(G_1^2)$.

Let $G_2=G-V(G_1)$. By the minimality of $G$, the graph $G_2^2$ admits an AT-orientation $D_2$ such that
\[
d_{D_2}^+(u)\le5
\]
for every $u\in V(G_2^2)$.

Now orient $G^2$ by combining $D_1$ and $D_2$, and orient every edge between $V(G_1)$ and $V(G_2)$ from $V(G_1)$ to $V(G_2)$. Denote the resulting orientation by $D$.

For each $v\in V(G_1^2)$,
\[
d_D^+(v)\le (5-t_v)+t_v=5,
\]
and for each $u\in V(G_2^2)$,
\[
d_D^+(u)=d_{D_2}^+(u)\le5.
\]
By Lemma~\ref{key-AT-configuration}, $D$ is an AT-orientation of $G^2$. Hence
\[
AT(G^2)\le6,
\]
contradicting the choice of $G$.
\end{proof}

\medskip

We next extend the structural results of Kim and Luo~\cite{KLuo25} from list coloring to Alon--Tarsi choosability. More precisely, we show that every configuration proved reducible for list coloring in~\cite{KLuo25} is also reducible for Alon--Tarsi choosability. Consequently, once these reducibility results are established, the remainder of the proof in~\cite{KLuo25} carries over verbatim to prove Theorem~\ref{main-thm}.

We begin with the following simple observations.

\begin{claim}\label{CL:2-vertex}
\begin{enumerate}[(1)]
\item  The minimum degree of $G$  is at least 2.

\item No $2$-vertex of $G$ is contained in a triangle.

\item No two $2$-vertices of $G$ are adjacent.
\end{enumerate}
\end{claim}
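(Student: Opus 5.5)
All three parts will follow from Claim~\ref{key-remark}, applied to a small induced subgraph $G_1$ of $G$. In each case we need to show that $G_1^2$ \emph{is} Alon--Tarsi $f$-choosable, where $f(v)=6-t_v$. By the orientation characterization, this means exhibiting an AT-orientation $D_1$ of $G_1^2$ with $d^+_{D_1}(v)\le 5-t_v$ for every $v$. The simplest way to get an AT-orientation is an \emph{acyclic} orientation. It has exactly one spanning Eulerian subdigraph, the empty one, which is even, so $|EE|=1\neq 0=|EO|$. So in each case we take $G_1$ to be a single vertex or an edge and orient acyclically.

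\textbf{(1)} $G$ is connected and is not a single vertex, since that would give $AT(G^2)=1$. Suppose $v$ has degree $1$ with neighbour $u$, and take $G_1=\{v\}$. Every $G^2$-neighbour of $v$ is $u$ or a neighbour of $u$ other than $v$, so $t_v\le 1+2=3$. The single-vertex graph $G_1^2$ with out-degree $0\le 5-t_v$ is trivially an AT-orientation, which contradicts Claim~\ref{key-remark}. Hence $\delta(G)\ge 2$.

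\textbf{(2)} Let $v$ be a $2$-vertex in a triangle $vuw$, and take $G_1=\{v\}$. Then $N_{G^2}(v)\subseteq\{u,w\}\cup(N(u)\setminus\{v,w\})\cup(N(w)\setminus\{v,u\})$. Since $u$ and $w$ each have at most one neighbour outside the triangle, $t_v\le 4\le 5$. The same argument as in (1) gives a contradiction.

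\textbf{(3)} Let $u,v$ be adjacent $2$-vertices, with $u$ adjacent to $u'$ and $v$ adjacent to $v'$, and take $G_1=\{u,v\}$. By (2) the edge $uv$ lies in no triangle, so $u'\neq v'$ and $u'\not\sim v$. The vertices of $G^2$ adjacent to $u$ and outside $G_1$ are $u'$, $v'$, and the at most two other neighbours of $u'$, so $t_u\le 4$, and symmetrically $t_v\le 4$. Orient the single edge of $G_1^2$ as $u\to v$. This is acyclic, hence an AT-orientation, with $d^+(u)=1\le 5-t_u$ and $d^+(v)=0\le 5-t_v$. This contradicts Claim~\ref{key-remark}.

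The only real care needed is the neighbour counting. In (3), cycles of length $4$ to $8$ do not constrain anything here, but one should confirm that shared neighbours only decrease $t$, so the upper bounds hold in all cases. One should also check that $G_1$ is induced, which holds because $G_1$ is a single vertex or an edge. Since the claimed bounds $t\le 4$ leave slack in every case, I expect no obstacle.
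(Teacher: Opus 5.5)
Your proof is correct and is essentially the paper's argument: the same choices of $G_1$ (the single vertex in (1) and (2), the edge in (3)), the same bounds $t_v\le 3,4,4$, and the same contradiction with Claim~\ref{key-remark}. Beyond the paper, you make the AT-orientation explicit (it is acyclic, hence $|EE|=1\neq 0=|EO|$) and you rule out the degenerate single-vertex case; the paper leaves both points implicit.
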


\begin{proof}
(1) Suppose that $G$ contains a $1$-vertex $x$, and let $H=\{x\}$. Then
\[
t_x=\bigl|N_{G^2}(x)\setminus V(H)\bigr|\le3,
\]
so $f(x)\ge3$. Since $H^2$ consists of a single vertex, it is Alon--Tarsi $f$-choosable, contradicting Claim~\ref{key-remark}.

(2) Suppose that $G$ contains a $2$-vertex $x$ lying on a triangle $xyzx$, and let $H=\{x\}$. Then
\[
t_x=\bigl|N_{G^2}(x)\setminus V(H)\bigr|\le4,
\]
so $f(x)\ge2$. Again, $H^2$ is Alon--Tarsi $f$-choosable, contradicting Claim~\ref{key-remark}.

(3) Suppose that $G$ contains adjacent $2$-vertices $v_1$ and $v_2$, and let
\[
H=\{v_1,v_2\}.
\]
Then
\[
t_{v_i}=\bigl|N_{G^2}(v_i)\setminus V(H)\bigr|\le4
\]
for $i=1,2$, so $f(v_i)\ge2$ for each $i$. Since $H^2$ is a single edge, it is Alon--Tarsi $f$-choosable, contradicting Claim~\ref{key-remark}.
\end{proof}
\begin{figure*}[htbp]
\begin{multicols}{3}
\begin{center}
\begin{tikzpicture}
[u/.style={fill=black, minimum size =3pt,ellipse,inner sep=1pt},
u-circle/.style={circle, draw, fill=none, minimum size =3pt,ellipse,inner sep=1pt},node distance=1.5cm,scale=0.8]

\node[u-circle] (v1) at (1, 0){};
\node[u] (v2) at (2, 0){};
\node[u] (v3) at (3, 0){};
\node[u-circle] (v4) at (4, 0){};
\node[u] (v5) at (2.5, 1){};

  \draw (0.5, 0) -- (v1);
  \draw (v1) -- (v2);
  \draw (v2) -- (v3);
  \draw (v3) -- (v4);
  \draw (v4) -- (4.5, 0);

  \draw (v2) -- (v5);
  \draw (v3) -- (v5);

   \node[below=0.1cm, font=\small] at (v1) {$v_1$};
   \node[below=0.1cm,font=\small] at (v2) {$v_2$};
   \node[below=0.1cm, font=\small] at (v3) {$v_3$};
   \node[below=0.1cm,font=\small] at (v4) {$v_4$};
    \node[above=0.1cm,font=\small] at (v5) {$v_5$};

 \end{tikzpicture}
        \vfill {\small The subgraph $T_1$}
\end{center}
\par
\begin{center}
\begin{tikzpicture}
[u/.style={fill=black, minimum size =3pt,ellipse,inner sep=1pt},
u-circle/.style={circle, draw, fill=none, minimum size =3pt,ellipse,inner sep=1pt},node distance=1.5cm,scale=0.8]

\node[u] (v1) at (1, 0){};
\node[u] (v2) at (2, 0){};
\node[u] (v3) at (3, 0){};
\node[u] (v4) at (4, 0){};
\node[u-circle] (v5) at (5, 0){};
\node[u] (v6) at (1.5, 1){};
\node[u] (v7) at (3.5, 1){};

  \draw (0.5, 0) -- (v1);
  \draw (v1) -- (v2);
  \draw (v2) -- (v3);
  \draw (v3) -- (v4);
  \draw (v4) -- (v5);
  \draw (v5) -- (5.5, 0);

  \draw (v1) -- (v6);
  \draw (v2) -- (v6);
  \draw (v3) -- (v7);
  \draw (v4) -- (v7);

   \node[below=0.1cm, font=\small] at (v1) {$v_6$};
   \node[below=0.1cm,font=\small] at (v2) {$v_1$};
   \node[below=0.1cm, font=\small] at (v3) {$v_2$};
   \node[below=0.1cm,font=\small] at (v4) {$v_3$};

    \node[below=0.1cm, font=\small] at (v5) {$v_4$};
     \node[above=0.1cm, font=\small] at (v6) {$v_7$};
     \node[above=0.1cm, font=\small] at (v7) {$v_5$};

 \end{tikzpicture}
        \vfill {\small The subgraph  $T_2$}
\end{center}
\par
\begin{center}
\begin{tikzpicture}
[u/.style={fill=black, minimum size =3pt,ellipse,inner sep=1pt},
u-circle/.style={circle, draw, fill=none, minimum size =3pt,ellipse,inner sep=1pt},node distance=1.5cm,scale=0.8]

\node[u] (v1) at (1, 0){};
\node[u] (v2) at (2, 0){};
\node[u-circle] (v3) at (3, 0){};
\node[u] (v4) at (4, 0){};
\node[u-circle] (v5) at (5, 0){};
\node[u] (v6) at (1.5, 1){};

  \draw (v1) -- (v2);
  \draw (v2) -- (v3);
  \draw (v3) -- (v4);
  \draw (v4) -- (v5);
  \draw (v5) -- (5.5, 0);

  \draw (v1) -- (v6);
  \draw (v2) -- (v6);
  \draw (v4) -- (4, 0.5);

   \node[below=0.1cm, font=\small] at (v1) {$v_1$};
   \node[below=0.1cm,font=\small] at (v2) {$v_2$};
   \node[below=0.1cm, font=\small] at (v3) {$v_3$};
   \node[below=0.1cm,font=\small] at (v4) {$v_4$};

    \node[below=0.1cm, font=\small] at (v5) {$v_5$};
     \node[above=0.1cm, font=\small] at (v6) {$v_6$};
 \end{tikzpicture}
        \vfill {\small The subgraph  $T_3$}
\end{center}
\end{multicols} 
\caption{Reducible configurations $T_1$, $T_2$, and $T_3$. Black (resp.\ white) vertices denote $3$-vertices (resp.\ $2$-vertices).}
\label{T123-subgraph}
\end{figure*}

 For a cycle $C$ in $G$, let $d(C)$ denote its length. A \emph{$k^+$-cycle} is a cycle of length at least $k$. Let
$t(C)$ denote the sum of the number of $2$-vertices on $C$ and the number of triangles sharing an edge with $C$.
A key step in our proof is to bound $t(C)$ for cycles of length at least $9$. We begin by ruling out the following configurations.

\begin{claim}
\label{cl:reducible}
For each $i\in\{1,2,3\}$, the graph $G$ contains no copy of the configuration $T_i$ shown in Figure~\ref{T123-subgraph}.
\end{claim}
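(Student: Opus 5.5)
The plan is to handle each $T_i$ the same way, using Claim~\ref{key-remark}. Suppose $G$ contains a copy of $T_i$ and let $G_1$ be the subgraph of $G$ induced by the vertices of $T_i$. The absence of $k$-cycles for $4\le k\le 8$ forces the following:
\begin{itemize}
\item the vertices of the copy are distinct;
\item $G_1$ has no edges beyond those drawn in Figure~\ref{T123-subgraph};
\item the pendant neighbours outside $T_i$ are pairwise distinct and nonadjacent, except where a triangle is drawn.
\end{itemize}
From this we can compute exactly, or bound from above, $t_v=|N_{G^2}(v)\setminus V(G_1)|$ for each $v\in V(G_1)$. A vertex at distance $\le 2$ from $v$ in $G$ but outside $G_1$ is either an outside neighbour of $v$ or an outside neighbour of a neighbour of $v$. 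Each $3$-vertex contributes at most two of these and each $2$-vertex at most one.

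For example, in $T_1$ let $a$, $b$, $w$ be the outside neighbours of $v_1$, $v_4$, $v_5$. Then $t_{v_1},t_{v_4},t_{v_5}\le 3$ and $t_{v_2},t_{v_3}\le 2$. So we need an AT-orientation of $G_1^2$ with out-degree at most $2$ at $v_1,v_4,v_5$ and at most $3$ at $v_2,v_3$. Here $G_1^2$ is $K_5$ minus the edge $v_1v_4$: it has $9$ edges, and the available out-degree budget is $12$. We compute the analogous out-degree bounds $f(v)-1=5-t_v$ and the graph $G_1^2$ for $T_2$ (seven vertices) and $T_3$ (six vertices). In each case we first check the necessary counting condition that $\sum_v (5-t_v)\ge |E(G_1^2)|$.

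Next we construct the orientation. First we try to find an acyclic orientation meeting the bounds. Such an orientation is automatically an AT-orientation, because its only Eulerian subdigraph is the empty one, so $|EE|=1$ and $|EO|=0$. To find one, we repeatedly delete a vertex whose degree in the remaining graph is at most its budget, and orient all its remaining edges away from it.

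When this greedy procedure gets stuck, as it does immediately for $T_1$, we peel off an acyclic part where possible and isolate a small dense core $X$. Lemma~\ref{key-AT-configuration} then reduces the question to finding an AT-orientation of the core alone. On the core we write down an explicit orientation meeting the budgets, typically one containing only a few directed triangles. We then enumerate its spanning Eulerian subdigraphs directly and verify that $|EE(D)|\neq|EO(D)|$.

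For $T_1$ the core is the whole $K_5-v_1v_4$. We will try the orientation $v_2\to v_3\to v_5\to v_2$ on the triangle. We then orient $v_1$ towards two of $v_2,v_3,v_5$ and $v_4$ towards two of them, with the remaining edges pointing into $v_1$ and $v_4$, chosen so that the out-degrees are $(2,3,3,2,2)$ or less. Finally we count the Eulerian subdigraphs built from the directed triangles and the $4$-cycles through $v_1$ or $v_4$, and adjust the choice if the counts happen to balance.

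The main obstacle is this last step for the largest configuration, $T_2$. There the budgets are tight, no degeneracy order exists, and the number of Eulerian subdigraphs can be sizable. If hand enumeration becomes unwieldy, we would instead compute the relevant coefficient of $P_{G_1^2}$ directly, using the monomial $\prod x_v^{5-t_v}$ or a smaller one, by successive extraction of variables.

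Once an AT-orientation with $d^+(v)\le 5-t_v$ is found for each $i$, $G_1^2$ is Alon--Tarsi $f$-choosable. This contradicts Claim~\ref{key-remark}, so $G$ contains no copy of $T_1$, $T_2$ or $T_3$.
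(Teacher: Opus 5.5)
Your framework is the paper's (Claim~\ref{key-remark} together with Lemma~\ref{key-AT-configuration}), and your budgets for $T_1$ are right. But the proposal never supplies what the claim actually consists of: for each $T_i$, an explicit AT-orientation or a monomial with nonzero coefficient. Worse, the one concrete candidate you describe cannot work. Put $S=\{v_2,v_3,v_5\}$ and $V=(x_2-x_3)(x_2-x_5)(x_3-x_5)$. Then $P_{G_1^2}=\pm\prod_{u\in S}(x_1-x_u)\prod_{u\in S}(x_4-x_u)\,V$, so the coefficient of $x_1^2x_4^2$ is $\pm(x_2+x_3+x_5)^2V$. This is alternating in $x_2,x_3,x_5$, so it contains no monomial with two equal exponents. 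Your family has the cyclic triangle $v_2\to v_3\to v_5\to v_2$ and $d^+(v_1)=d^+(v_4)=2$, so the triangle vertices have out-degrees $\ge 1$ summing to $5$, and two of them coincide. Since $|EE(D)|-|EO(D)|$ is $\pm$ the coefficient of $\prod_v x_v^{d_D^+(v)}$, every orientation in your family balances. For instance, in-arcs $v_2\to v_1$ and $v_3\to v_4$ give $|EE|=|EO|=5$. So ``adjusting the choice'' within the family never succeeds. Your fallback is also misstated: $\prod_v x_v^{5-t_v}$ has degree $12>9=|E(G_1^2)|$ for $T_1$ (and $16>14$, $11>10$ for your versions of $T_2,T_3$). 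Its coefficient in the homogeneous polynomial $P_{G_1^2}$ is therefore trivially $0$. You need a dominated monomial of degree exactly $|E(G_1^2)|$, and you never identify one.

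Concretely, for $T_1$ lower $d^+(v_1)$ to $1$. Keep the cyclic triangle and add $v_1\to v_2$, $v_4\to v_2$, $v_3\to v_1$, $v_5\to v_1$, $v_3\to v_4$, $v_4\to v_5$, giving out-degrees $1,1,3,2,2$. Removing $v_3$ leaves an acyclic digraph, and $v_2\to v_3$ is the only arc into $v_3$. So the Eulerian subdigraphs are $\emptyset$ and six single directed cycles of lengths $3,3,3,4,4,5$, giving $|EE|=3\ne 4=|EO|$; this is the paper's monomial $x_1x_2x_3^3x_4^2x_5^2$ with coefficient $-1$. For $T_2$, which you flagged as the main obstacle, the missing idea is to not keep $v_6,v_7$ in the core. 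Put $Y=\{v_6,v_7\}$ on the sink side of Lemma~\ref{key-AT-configuration}: orient $v_6\to v_7$ and orient the four edges $v_1v_6,v_1v_7,v_2v_6,v_2v_7$ into $Y$. This costs $v_1$ and $v_2$ two units each and leaves exactly $K_5-v_1v_4$ on $\{v_1,\dots,v_5\}$ with bounds $(1,2,3,2,2)$, which the same orientation meets. This is precisely the paper's passage to $Q_2$, and it lets one monomial settle $T_1$ and $T_2$ together. Similarly, in $T_3$ sinking $\{v_1,v_6\}$ leaves $\{v_2,v_3,v_4,v_5\}$ with bounds $(1,2,1,1)$, which sum to the $5$ edges, so no acyclic orientation exists. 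However, $v_2\to v_3\to v_4\to v_2$ together with $v_3\to v_5\to v_4$ has $|EE|=2$ and $|EO|=1$ (the paper's $x_2x_3^2x_4x_5$). Without such certificates the proposal is a search strategy, not a proof.
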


\begin{proof}
Recall from Claim~\ref{CL:2-vertex} that every vertex contained in a triangle has degree $3$.

\medskip
\noindent
(a)  We first show that $G$ doesn't contain $T_1$ or $T_2$.  Suppose by contradiction that $G$ contains $T_j$ for some $j =1,2$.  Let $Q_j$ be the subgraph of $T_j$ induced by  $\{v_1,v_2,v_3,v_4,v_5\}$ ($Q_1=T_1$).
Again define 
$f(v)=6-t_v$ for each $v\in V(Q_j)$.
Then, as illustrated in  Figure~\ref{T123-color}(i)  or (ii) depending on $j=1$ or $2$, we have
\[
f(v_i)\ge 
\begin{cases}
2,& i=1,\\
3,& i\in\{2, 4,5\},\\
4,& i=3.
\end{cases}
\]

Moreover,
\[
\begin{aligned}
P_{Q_j^2}(\bm{x})
&=\prod_{u\sim v,\;u<v}(x_u-x_v)\\
&=(x_1-x_2)(x_1-x_3)(x_1-x_5)
(x_2-x_3)(x_2-x_4)(x_2-x_5)
(x_3-x_4)(x_3-x_5)(x_4-x_5).
\end{aligned}
\]
A direct computation in \textsc{Mathematica} shows that the coefficient of
\[
x_1x_2x_3^3x_4^2x_5^2
\]
is $-1$. Hence $Q_j^2$ is Alon--Tarsi $f$-choosable, contradicting Claim~\ref{key-remark}. Thus $G$ doesn't contain $T_j$ for each $j=1, 2$.

\medskip
\noindent
(b)  Next we show that $G$ doesn't contain $T_3$. Suppose by contradiction that  $G$ contains $T_3$. Let $Q_3$ be the subgraph of $T_3$ induced by
$\{v_2,v_3,v_4,v_5\}$.
Define $f(v)=6-t_v$ for each $v\in V(Q_3)$.
Then, as illustrated in  Figure~\ref{T123-color}(iii), we have
\[
f(v_i)=
\begin{cases}
2,& i\in\{2,4,5\},\\
3,& i=3.
\end{cases}
\]

The graph polynomial of $Q_3^2$ is
\[
\begin{aligned}
P_{Q_3^2}(\bm{x})
&=\prod_{u\sim v,\;u<v}(x_u-x_v)\\
&=(x_2-x_3)(x_2-x_4)
(x_3-x_4)(x_3-x_5)(x_4-x_5).
\end{aligned}
\]
A direct computation in \textsc{Mathematica} shows that the coefficient of
\[
x_2x_3^2x_4x_5
\]
is $1$. Hence $Q_3^2$ is Alon--Tarsi $f$-choosable, again contradicting Claim~\ref{key-remark}. Therefore $T_3$ cannot occur in $G$.
\end{proof}

\begin{figure*}[htbp]
\begin{multicols}{3}
\begin{center}
\begin{tikzpicture}
[u/.style={fill=black, minimum size =3pt,ellipse,inner sep=1pt},
u-circle/.style={circle, draw, fill=none, minimum size =3pt,ellipse,inner sep=1pt},node distance=1.5cm,scale=0.8]

\node[u-circle] (v1) at (1, 0){};
\node[u] (v2) at (2, 0){};
\node[u] (v3) at (3, 0){};
\node[u-circle] (v4) at (4, 0){};
\node[u] (v5) at (2.5, 1){};

  \draw (0.5, 0) -- (v1);
  \draw (v1) -- (v2);
  \draw (v2) -- (v3);
  \draw (v3) -- (v4);
  \draw (v4) -- (4.5, 0);
  \draw (v2) -- (v5);
  \draw (v3) -- (v5);

   \node[below=0.1cm, font=\small] at (v1) {$v_1$};
   \node[below=0.1cm,font=\small] at (v2) {$v_2$};
   \node[below=0.1cm, font=\small] at (v3) {$v_3$};
   \node[below=0.1cm,font=\small] at (v4) {$v_4$};
    \node[above=0.1cm,font=\small] at (v5) {$v_5$};

 \node[above=0.05cm,  font=\scriptsize] at (v1) {\textcolor{blue}{3}};
\node[left=0.05cm, above=0.05cm,font=\scriptsize] at (v2) {\textcolor{blue}{4}};
\node[right=0.05cm, above=0.05cm,font=\scriptsize] at (v3) {\textcolor{blue}{4}};
\node[above=0.05cm, font=\scriptsize] at (v4) {\textcolor{blue}{3}};
\node[right=0.05cm, font=\scriptsize] at (v5) {\textcolor{blue}{3}};
 \end{tikzpicture}
        \vfill {(i) $f(v)$ of $T_1$}
\end{center}
\par
\begin{center}
\begin{tikzpicture}
[u/.style={fill=black, minimum size =3pt,ellipse,inner sep=1pt},
u-circle/.style={circle, draw, fill=none, minimum size =3pt,ellipse,inner sep=1pt},node distance=1.5cm,scale=0.8]
\node[u] (v1) at (1, 0){};
\node[u] (v2) at (2, 0){};
\node[u] (v3) at (3, 0){};
\node[u] (v4) at (4, 0){};
\node[u-circle] (v5) at (5, 0){};
\node[u] (v6) at (1.5, 1){};
\node[u] (v7) at (3.5, 1){};

  \draw (0.5, 0) -- (v1);
  \draw (v1) -- (v2);
  \draw (v2) -- (v3);
  \draw (v3) -- (v4);
  \draw (v4) -- (v5);
  \draw (v5) -- (5.5, 0);

  \draw (v1) -- (v6);
  \draw (v2) -- (v6);
  \draw (v3) -- (v7);
  \draw (v4) -- (v7);

 \node[below=0.1cm, font=\small] at (v1) {$v_6$};
   \node[below=0.1cm,font=\small] at (v2) {$v_1$};
   \node[below=0.1cm, font=\small] at (v3) {$v_2$};
   \node[below=0.1cm,font=\small] at (v4) {$v_3$};

    \node[below=0.1cm, font=\small] at (v5) {$v_4$};
     \node[above=0.1cm, font=\small] at (v6) {$v_7$};
     \node[above=0.1cm, font=\small] at (v7) {$v_5$};

 \node[left=0.05cm, above=0.05cm,  font=\scriptsize] at (v1) {};
\node[right=0.05cm, above=0.05cm,font=\scriptsize] at (v2) {\textcolor{blue}{2}};
\node[left=0.05cm, above=0.05cm,font=\scriptsize] at (v3) {\textcolor{blue}{3}};
\node[right=0.05cm, above=0.05cm, font=\scriptsize] at (v4) {\textcolor{blue}{4}};
\node[above=0.05cm, font=\scriptsize] at (v5) {\textcolor{blue}{3}};
\node[right=0.1cm, font=\scriptsize] at (v6) {};
\node[right=0.1cm, font=\scriptsize] at (v7) {\textcolor{blue}{3}};
 \end{tikzpicture}
        \vfill {(ii) $f(v)$  of $Q_2$ ($T_2$ case)}
\end{center}
\par
\begin{center}
\begin{tikzpicture}
[u/.style={fill=black, minimum size =3pt,ellipse,inner sep=1pt},
u-circle/.style={circle, draw, fill=none, minimum size =3pt,ellipse,inner sep=1pt},node distance=1.5cm,scale=0.8]
\node[u] (v1) at (1, 0){};
\node[u] (v2) at (2, 0){};
\node[u-circle] (v3) at (3, 0){};
\node[u] (v4) at (4, 0){};
\node[u-circle] (v5) at (5, 0){};
\node[u] (v6) at (1.5, 1){};

  \draw (v1) -- (v2);
  \draw (v2) -- (v3);
  \draw (v3) -- (v4);
  \draw (v4) -- (v5);
  \draw (v5) -- (5.5, 0);

  \draw (v1) -- (v6);
  \draw (v2) -- (v6);
  \draw (v4) -- (4, 0.5);

   \node[below=0.1cm, font=\small] at (v1) {$v_1$};
   \node[below=0.1cm,font=\small] at (v2) {$v_2$};
   \node[below=0.1cm, font=\small] at (v3) {$v_3$};
   \node[below=0.1cm,font=\small] at (v4) {$v_4$};

    \node[below=0.1cm, font=\small] at (v5) {$v_5$};
     \node[above=0.1cm, font=\small] at (v6) {$v_6$};

 \node[left=0.05cm, above=0.05cm,  font=\scriptsize] at (v1) {};
\node[right=0.05cm, above=0.05cm,font=\scriptsize] at (v2) {\textcolor{blue}{2}};
\node[above=0.05cm,font=\scriptsize] at (v3) {\textcolor{blue}{3}};
\node[right=0.1cm, above=0.05cm, font=\scriptsize] at (v4) {\textcolor{blue}{2}};
\node[above=0.05cm, font=\scriptsize] at (v5) {\textcolor{blue}{2}};
\node[right=0.1cm, font=\scriptsize] at (v6) {};
 \end{tikzpicture}
        \vfill {(iii) $f(v)$  of $Q_3$ ($T_3$ case)}
\end{center}
\end{multicols} 
\caption{The numbers indicate the values of $f(v)$ at the corresponding vertices.}
\label{T123-color}
\end{figure*}

  As shown in \cite{KLuo25} (see Definition~12 and Claims~13--15), Claim~\ref{cl:reducible} implies the following result.

\begin{claim}
\label{lemma-C11}
If $C$ is a $9^+$-cycle in $G$, then
\[
t(C)\le d(C)-\left\lceil\frac{d(C)}{2}\right\rceil.
\]
In particular,
\[
t(C)\le d(C)-6
\]
whenever $d(C)\ge11$.
\end{claim}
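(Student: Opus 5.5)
The plan is a discharging-style counting argument along the cycle $C$. The only inputs are Claim~\ref{CL:2-vertex} and the absence of $T_1$, $T_2$, $T_3$ from Claim~\ref{cl:reducible}. Write $C=u_0u_1\cdots u_{n-1}u_0$ with $n=d(C)\ge 9$, indices modulo $n$.

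\textbf{Assigning contributions to edges.} Each unit counted in $t(C)$ is attached to edges of $C$:
\begin{itemize}
\item A triangle sharing an edge $u_iu_{i+1}$ with $C$ is assigned to that edge.
\item A $2$-vertex $u_i$ is assigned half to each of its two incident edges $u_{i-1}u_i$ and $u_iu_{i+1}$.
\end{itemize}
Two facts keep this assignment clean. Since $G$ has no $4$-cycles, two triangles cannot share an edge, and a triangle cannot share two edges with $C$ (that would force $C$ itself to be a triangle). Also, by Claim~\ref{CL:2-vertex}(2), no endpoint of a triangle edge is a $2$-vertex. Hence every edge of $C$ receives charge at most $1$. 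A triangle edge receives exactly $1$; any other edge receives $0$, $\tfrac12$ or $1$ according to how many of its endpoints have degree $2$. By Claim~\ref{CL:2-vertex}(3), no edge has two $2$-endpoints, so in fact every non-triangle edge receives at most $\tfrac12$.

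\textbf{Local bound.} With this assignment, $t(C)$ equals the total charge. It therefore suffices to show that the total charge is at most $\lfloor n/2\rfloor=n-\lceil n/2\rceil$. I would prove this by showing that every window of two consecutive edges $u_{i-1}u_i,u_iu_{i+1}$ carries charge at most $1$, and then averaging: summing over all $n$ windows counts each edge twice, which gives total charge at most $n/2$. Since $t(C)$ is an integer, this yields $t(C)\le\lfloor n/2\rfloor$.

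Checking the window bound at a vertex $u_i$ splits into cases:
\begin{itemize}
\item If both edges are non-triangle edges, each has charge at most $\tfrac12$, so the window has charge at most $1$.
\item Two triangle edges cannot be consecutive. If they were, $u_i$ would lie in two triangles, and with degree $3$ these triangles would share an edge, creating a $4$-cycle.
\item If $u_{i-1}u_i$ is a triangle edge and $u_iu_{i+1}$ has positive charge, then $u_{i+1}$ is a $2$-vertex adjacent to a triangle. This is a half-copy of $T_1$/$T_3$.
\end{itemize}
The last case is where the pure local count fails, so I would refine the charges there. The triangle on $u_{i-1}u_i$ absorbs only charge $\tfrac12$ from its own edge and sends the remainder to the far side $u_{i-2}u_{i-1}$. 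Then $T_1$ (a triangle flanked by $2$-vertices on both sides) and $T_3$ (a triangle, then a $2$-vertex, then a $3$-vertex, then a $2$-vertex) are exactly the configurations that would force that far side to overflow. Likewise $T_2$ (a triangle, a $3$-vertex, a triangle, then a $2$-vertex) rules out chains of triangles separated by one vertex that end at a $2$-vertex. These should be checked as a finite list of patterns on a path of at most $6$ consecutive vertices of $C$.

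\textbf{Consequence and main obstacle.} Once $t(C)\le\lfloor n/2\rfloor$ is established, the second statement is immediate for $n\ge 11$, because $\lfloor n/2\rfloor\le n-6$ is equivalent to $\lceil n/2\rceil\ge 6$. The main obstacle is designing the redistribution rule near triangles so that every local pattern not excluded by $T_1$, $T_2$, $T_3$ has average charge at most $\tfrac12$ per edge. A further subtlety is that some pendant vertices may lie on $C$ itself, for example when a triangle's third vertex is on $C$. Such cases should be excluded by the absence of $4$- to $8$-cycles together with $n\ge 9$.
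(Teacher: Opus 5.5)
\textbf{There is a genuine gap: the window bound is never proved.} Your reduction is sound. With your edge charges the total is exactly $t(C)$, and a bound of $1$ on every window of two consecutive $C$-edges gives $2t(C)\le n$ by averaging. But the proposal never establishes that window bound. You observe that it fails for the raw charges, sketch a redistribution, and leave its verification as ``the main obstacle.'' Everything else is bookkeeping, so that verification is the whole content of the claim, and as written the argument is incomplete.

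Two further points need fixing. First, your account of the configurations' roles is off. $T_3$ does not control far-side overflow, since it constrains the side of the triangle edge that does \emph{not} receive charge. In fact $T_3$ is not needed at all. Second, your claim that a triangle cannot share two edges with $C$ is false. The triangle $u_{i-1}u_iu_{i+1}$ does so whenever $u_{i-1}u_{i+1}$ is a chord. That chord creates an $(n-1)$-cycle, which is excluded only when $n=9$. This is also the case your ``two consecutive triangle edges'' argument misses, because the two triangles there may coincide. It is harmless: put $\tfrac12$ of its unit on each of its two edges and never let it send. But it must be handled.

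\textbf{Your own rule closes the gap, using only $T_1$ and $T_2$.} Call the remaining triangle edges ordinary. Let an ordinary triangle edge $u_ju_{j+1}$ with $d(u_{j+2})=2$ move $\tfrac12$ to $u_{j-1}u_j$, and symmetrically if $d(u_{j-1})=2$.
\begin{itemize}
\item By $T_1$, no triangle edge sends twice, and the receiving edge has $d(u_{j-1})=3$. It is not a triangle edge, since consecutive $C$-edges never lie on distinct triangles, so it starts with charge $0$.
\item By $T_2$ (triangle edge, $3$--$3$ edge, triangle edge, $2$-vertex) and its mirror image, a non-triangle edge flanked on both sides by ordinary triangle edges receives nothing.
\item Hence every edge other than an ordinary triangle edge that kept its full unit ends with charge at most $\tfrac12$.
\end{itemize}

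Now check the window at $u_i$.
\begin{itemize}
\item If $d(u_i)=2$, the window is $\tfrac12+\tfrac12$.
\item If neither edge is an ordinary triangle edge, each carries at most $\tfrac12$.
\item Otherwise, say $u_{i-1}u_i$ is an ordinary triangle edge. If $d(u_{i+1})=2$, then $u_{i-1}u_i$ has sent and the window is $\tfrac12+\tfrac12$. If $d(u_{i+1})=3$, then $u_iu_{i+1}$ is a $3$--$3$ non-triangle edge. By the $T_2$ fact it can only have received from $u_{i-1}u_i$, so the window is $1+0$ or $\tfrac12+\tfrac12$.
\end{itemize}
This gives $t(C)\le\lfloor n/2\rfloor$. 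The paper itself gives no argument for this claim and defers to Definition~12 and Claims~13--15 of Kim and Luo, so a completed version of your count would be a self-contained replacement.
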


The proof is identical to that of \cite[Definition~12 and Claims~13--15]{KLuo25}, and is therefore omitted.

To complete the discharging argument, it remains to extend the bound
\[
t(C)\le d(C)-6
\]
from $11^+$-cycles in Claim~\ref{lemma-C11} to $9$- and $10$-cycles. For this purpose, we establish the following reducible configurations.

\begin{claim}[Reducible configurations on $10$-cycles]
\label{lem-H234}
The graph $G$ contains none of the configurations $H_1,H_2,H_3,$ or $H_4$ shown in Figure~\ref{H234}.
\end{claim}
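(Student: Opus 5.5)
We do not see Figure~\ref{H234}, but the configurations $H_1,\dots,H_4$ are presumably $10$-cycles $C=u_1u_2\cdots u_{10}$ with $t(C)\ge 5$. Such a cycle carries many $2$-vertices and triangles sharing edges with it, arranged so as to avoid $T_1,T_2,T_3$ and respect Claim~\ref{CL:2-vertex}. For each $H_i$ we follow the same template as in Claim~\ref{cl:reducible} and reduce to Claim~\ref{key-remark}.

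\textbf{Choosing $G_1$ and the lists.} Take $G_1$ to be the subgraph induced by the vertices of the $10$-cycle, possibly together with some apex vertices of the triangles attached to $C$. For each $v\in V(G_1)$ compute $t_v=|N_{G^2}(v)\setminus V(G_1)|$ and set $f(v)=6-t_v$. The absence of $4$- to $8$-cycles guarantees that distinct outside neighbours of $C$ are non-adjacent and share no common neighbours near $C$. Hence $t_v$ can be read off locally:
\begin{itemize}
\item a $2$-vertex on $C$ has $t_v=0$, so $f(v)=6$;
\item a $3$-vertex on $C$ whose third neighbour $w$ lies off $C$ and not in a triangle has $t_v=3$;
\item a vertex on an attached triangle has $t_v$ smaller, because the apex is included in $G_1$.
\end{itemize}
Since $t(C)\ge5$, roughly half of the vertices receive large $f$-values.

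\textbf{Finding an AT-orientation.} The graph $G_1^2$ is essentially $C_{10}^2$ with a few extra vertices, and it has at most about $25$ to $30$ edges. We need an orientation $D$ of $G_1^2$ with $d^+_D(v)\le f(v)-1$ and $|EE(D)|\ne|EO(D)|$. Equivalently, we need a monomial $\prod x_v^{t'_v}$ in $P_{G_1^2}$ with $t'_v\le f(v)-1$ and $\sum_v t'_v=|E(G_1^2)|$ whose coefficient is nonzero.

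To keep the computation manageable, we first peel off, using Lemma~\ref{key-AT-configuration}, vertices with large $f$. Concretely, we orient all edges from a set $Y$ of high-$f$ vertices (for instance the $2$-vertices and the apexes) toward the rest, which leaves a smaller core $X$. Lemma~\ref{key-AT-configuration} then reduces the problem to checking that $D[X]$ and $D[Y]$ are AT-orientations. Often $D[Y]$ is acyclic, and an acyclic orientation is trivially AT because $|EE|=1$ and $|EO|=0$. Alternatively, we order the vertices so that the budget constraints force an acyclic orientation wherever possible. For the remaining core we find the required monomial by a direct computer-algebra computation, exactly as in Claim~\ref{cl:reducible}, and record the exponent vector and its nonzero coefficient.

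\textbf{Main obstacle.} The hardest step is the configuration in which the budget is tight, namely $\sum_v (f(v)-1)$ barely exceeds $|E(G_1^2)|$. In that case the orientation is essentially forced and contains directed cycles, for example around $C$ itself. Nonvanishing of the coefficient then genuinely requires computation or a parity count of Eulerian subdigraphs, and a careless choice of $G_1$ could give a coefficient of zero.

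If this happens, the first thing to try is shrinking $G_1$, i.e., dropping a cycle vertex, as was done with $Q_3\subset T_3$. This reduces both the edge count and the amount of cyclic structure. We would then re-verify by computer that the corresponding coefficient is $\pm1$ or otherwise nonzero.
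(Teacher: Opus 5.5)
Your framework is the paper's own: apply Claim~\ref{key-remark} to a suitable subgraph and exhibit an admissible monomial with nonzero coefficient. But the proposal stops exactly where the proof begins. Claim~\ref{lem-H234} is a finite verification, and its entire content is, for each $H_i$:
\begin{enumerate}[(i)]
\item an explicit subgraph $G_1$;
\item the exact values $f(v)=6-t_v$;
\item a monomial $\prod x_v^{t'_v}$ with $t'_v\le f(v)-1$ and nonzero coefficient in $P_{G_1^2}$.
\end{enumerate}
You supply none of these. You say the monomial will be found by computer, and you concede that the coefficient might vanish, without any argument that a good choice exists. The choices are genuinely case-specific. The paper uses the subgraph induced by $v_1,\dots,v_{11}$ (the $10$-cycle plus one apex) for $H_1$, deletes $v_1,v_2,v_{10},v_{11}$ from $H_2$, and uses all of $H_3$ and $H_4$. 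It then certifies, for example, that $x_1^3x_2x_3^2x_4^2\cdots x_{10}^2$ has coefficient $1$ in $P_{H_4^2}$.

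Two of your concrete steps are also wrong. First, a $2$-vertex on $C$ does not in general have $t_v=0$, because its second neighbourhood contains the outside neighbours of its two cycle-neighbours. In $H_4$, each $2$-vertex $v_i$ ($i$ odd) sees the pendant neighbours of $v_{i-1}$ and $v_{i+1}$, so $f(v_i)=4$, not $6$. The value $6$ arises only when both cycle-neighbours have their third neighbour inside $G_1$, as for $v_3$ in $H_3$.

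Second, the peeling shortcut via Lemma~\ref{key-AT-configuration} fails in exactly this case. Put the five $2$-vertices of $H_4$ in the source class. Each already spends $2$ of its $f(v)-1=3$ out-arcs on its even neighbours. So the $5$-cycle $v_1v_3v_5v_7v_9$ of $H_4^2$ must be oriented with every out-degree equal to $1$, i.e.\ as a directed $5$-cycle. That has $|EE|=|EO|=1$ and is not an AT-orientation. The reverse direction is impossible, since each even vertex would exhaust its budget of $2$ on its odd neighbours and the even $5$-cycle could not be oriented at all. With your inflated value $f=6$ this peeling would appear to succeed, which would yield an invalid reduction.

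To turn the plan into a proof, you must do the following:
\begin{enumerate}[(1)]
\item Compute $t_v$ exactly from $N_{G^2}(v)\setminus V(G_1)$.
\item Check, as the paper does using the absence of $4$- to $8$-cycles, that $G^2$ induces exactly $G_1^2$ on $V(G_1)$ and that no vertex of $G_1$ has two neighbours outside $G_1$, so that Claim~\ref{key-remark} genuinely applies.
\item Produce and verify a nonvanishing admissible monomial for each of the four configurations. In these cycle-like cases this means treating all of $G_1^2$ at once.
\end{enumerate}
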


\begin{claim}[Reducible configurations on $9$-cycles]
\label{claim-F-one}
The graph $G$ contains none of the configurations $F_1,F_2,\ldots,F_{12}$ shown in Figure~\ref{W12-subgraph}.
\end{claim}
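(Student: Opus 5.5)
For each configuration $F_i$ ($1\le i\le 12$) in Figure~\ref{W12-subgraph}, I will follow the template already used for Claim~\ref{cl:reducible}. Suppose $G$ contains a copy of $F_i$. I will choose a suitable vertex subset $S_i$ of the configuration and let $Q_i=G[S_i]$. Typically $S_i$ consists of the $2$-vertices on the $9$-cycle, the triangle vertices adjacent to the cycle, and the cycle vertices near them. For each $v\in S_i$ I set $t_v=|N_{G^2}(v)\setminus S_i|$ and $f(v)=6-t_v$. By Claim~\ref{key-remark}, it suffices to show that $Q_i^2$ is Alon--Tarsi $f$-choosable. That means exhibiting an AT-orientation $D$ of $Q_i^2$ with $d^+_D(v)\le f(v)-1$, or equivalently a monomial $\prod x_v^{d_v}$ with $d_v\le f(v)-1$, $\sum d_v=|E(Q_i^2)|$ and nonzero coefficient in $P_{Q_i^2}$.

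\emph{Bounding $f$.} Here I use that $G$ is subcubic and has no $4$- to $8$-cycles. In $G^2$ a $3$-vertex has at most $9$ neighbours, fewer when it lies on a triangle, and a $2$-vertex has at most $6$. Short cycles are forbidden, so two vertices of the $9$-cycle at cycle-distance at least $3$ have no common neighbour off the cycle. Hence the $G^2$-neighbourhood of $S_i$ outside $S_i$ can be counted exactly from the figure. I also use Claim~\ref{CL:2-vertex}: triangle vertices have degree $3$ and the $2$-vertices are pairwise nonadjacent. This fixes the lower bounds on $f(v)$, and I will record them in a figure as in Figure~\ref{T123-color}.

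\emph{The choosability step.} I will do this in stages rather than computing one large coefficient. First, if some vertex $v$ has $f(v)-1\ge d_{Q_i^2}(v)$, I orient all its edges outward and delete it; Lemma~\ref{key-AT-configuration} says this preserves AT-ness, with $X=\{v\}$ placed before the rest, after lowering $f$ of its neighbours by one each. Repeating this peels off slack vertices. What remains is a small core of five to eight vertices, often an odd cycle squared plus pendant pieces. There I find a monomial and verify its coefficient, either by computer algebra as in Claim~\ref{cl:reducible} or by hand by counting even versus odd Eulerian subdigraphs of an explicit orientation. For acyclic cores the count is trivially $1$ versus $0$.

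The main obstacle is the tight configurations, where many vertices have $f(v)$ exactly matching the degree budget. The total $\sum (f(v)-1)$ barely exceeds $|E(Q_i^2)|$, so no vertex can be peeled and the whole of $Q_i^2$ is needed. For these I would first try to choose $S_i$ so that $\sum_v (f(v)-1)=|E(Q_i^2)|$ leaves at least some freedom. I would then search, by machine, over the finitely many exponent vectors for one with nonzero coefficient, with the symmetry of the configuration reducing the search. If none exists for one choice of $S_i$, I would enlarge or shrink $S_i$, for example by adding a triangle apex or removing an endpoint $2$-vertex, and retry.
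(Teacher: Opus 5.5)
\textbf{Gap: the proposal never actually proves the claim.} What you give is a search strategy. For none of the twelve configurations do you state the values of $f$, the graph $Q_i^2$, or a monomial (or orientation) whose coefficient you have checked to be nonzero. You also explicitly allow that the search may fail for your first choice of $S_i$, with ``enlarge or shrink $S_i$ and retry'' as the fallback. But the reduction template (Claim~\ref{key-remark}) is already available, so the whole content of this claim is precisely these twelve certificates.

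The paper supplies them as follows:
\begin{itemize}
\item For $F_2,F_3,F_5,F_9,F_{10},F_{12}$ it cites Claim~24 of~\cite{KLuo25}, where $F_i^2$ was already shown Alon--Tarsi $f$-choosable by the polynomial method.
\item For $F_1,F_4,F_6,F_7,F_8,F_{11}$ it takes the whole vertex set $V(F_i)$ (the $9$-cycle plus the triangle apexes) and records $f$. It then exhibits an explicit monomial with nonzero coefficient, exponents at most $f(v)-1$, and total degree $|E(F_i^2)|$. For example, for $F_1$ the monomial is $x_1^2x_2^2x_3x_4^3x_5^2x_6^3x_7x_8x_9^3$, with coefficient $1$.
\end{itemize}
Without data of this kind, nothing has been established.

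\textbf{Your peeling stage can never fire.} You rely on it to shrink the computation to a small core, but it is vacuous here. For Claim~\ref{key-remark} to produce an orientation of all of $G^2$ you need $Q_i^2=G^2[S_i]$. Then $d_{Q_i^2}(v)=|N_{G^2}(v)|-t_v$, so the condition $f(v)-1\ge d_{Q_i^2}(v)$ is equivalent to $|N_{G^2}(v)|\le 5$.

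However, every vertex of $G$ has at least $6$ neighbours in $G^2$:
\begin{itemize}
\item A $2$-vertex has exactly $6$. By Claim~\ref{CL:2-vertex} both its neighbours are $3$-vertices, it lies on no triangle, and there are no $4$-cycles, so all second neighbours are distinct.
\item A $3$-vertex has at least $6$. Minimum degree $2$ gives each neighbour at least one further neighbour. The absence of $4$-cycles makes these distinct (this also covers the triangle case, where the triangle's other two vertices have degree $3$).
\end{itemize}

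So no vertex is ever peeled, and the ``core'' is always all of $Q_i^2$. Every configuration is of the tight kind you flag as the main obstacle, and you are back to the single large coefficient computation that the paper performs.

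\textbf{Minor error in the peeling rule.} If $v$ is made a source, its neighbours receive in-arcs, so their $f$ should not be lowered. Lowering them corresponds to making $v$ a sink. With your rule, $f(u)-1-d(u)$ is unchanged by each deletion, so peeling could not cascade even if it started.
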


The proofs of Claims~\ref{lem-H234} and~\ref{claim-F-one} are deferred to Section~\ref{sec:reducibility}.

Using Claims~18 and 20 of \cite{KLuo25} together with Claims~\ref{cl:reducible}, \ref{lem-H234}, and~\ref{claim-F-one}, one obtains the following result by exactly the same argument as in \cite{KLuo25}.

\begin{claim}\label{claim-main}
For every face $f$ of $G$ with $d(f)\ge9$,
\[
t(f)\le d(f)-6.
\]
\end{claim}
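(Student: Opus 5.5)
We prove Claim~\ref{claim-main} by splitting on the face size. Let $f$ be a face of $G$ with $d(f)\ge 9$. Since $G$ is a minimal counterexample, it is connected with minimum degree at least $2$ by Claim~\ref{CL:2-vertex}. We want the boundary of $f$ to be a cycle $C$; if $G$ has cut edges or cut vertices, the boundary walk of $f$ is a closed walk, and we handle that case by decomposing the walk. The bound $t(f)\le d(f)-6$ is immediate once we know it for every cycle in the decomposition, and in this case it is in fact weaker than what we get for cycles. The reason is that bridges and cut vertices contribute length to $d(f)$ without contributing to $t(f)$ proportionally. This is the same reduction as in \cite{KLuo25}, and we follow it there.

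\textbf{Faces of length at least $11$.} This case is exactly Claim~\ref{lemma-C11} applied to the facial cycle, so nothing new is needed.

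\textbf{Faces of length $9$ or $10$.} Here Claim~\ref{lemma-C11} only gives $t(C)\le 5$, while we need $t(C)\le 3$ for a $9$-cycle and $t(C)\le 4$ for a $10$-cycle. We argue by contradiction: assume $t(C)$ exceeds the bound. Along $C$, each $2$-vertex and each triangle sharing an edge with $C$ is a ``special'' element. Claim~\ref{CL:2-vertex} forbids adjacent $2$-vertices and $2$-vertices on triangles. Claim~\ref{cl:reducible} forbids the local patterns $T_1$, $T_2$ and $T_3$, which constrain the gaps between consecutive special elements. Together these leave only a short list of cyclic sequences of special elements around $C$ with $t(C)=5$ (and $t(C)=4$ for $d(C)=9$). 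This enumeration is precisely the content of Claims~18 and~20 of \cite{KLuo25}, whose proofs use only these structural facts and not the list-coloring method. Each surviving pattern is, up to symmetry, one of $H_1,\dots,H_4$ (for $10$-cycles) or $F_1,\dots,F_{12}$ (for $9$-cycles). Claims~\ref{lem-H234} and~\ref{claim-F-one} exclude all of them, which gives the contradiction.

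\textbf{Main obstacle.} The step to check carefully is that the case analysis of \cite{KLuo25} invokes no reducibility fact beyond those we have re-proved in the Alon--Tarsi setting. We also need the absence of $4$- to $8$-cycles, which guarantees that triangles touching $C$ are distinct and pairwise far apart. We would verify this by going through each case of Claims~18 and~20 in \cite{KLuo25} and matching every configuration excluded there with one of $T_i$, $H_i$ or $F_i$, or with Claim~\ref{CL:2-vertex}. Once this matching is complete, the conclusion $t(f)\le d(f)-6$ follows verbatim.
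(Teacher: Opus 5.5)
Your proposal follows the same route as the paper. The paper also takes this claim verbatim from the argument of \cite{KLuo25}: Claim~\ref{lemma-C11} handles $11^+$-faces, and Claims~18 and~20 of \cite{KLuo25}, combined with the Alon--Tarsi reducibility of the $T_i$, $H_i$ and $F_i$, dispose of $9$- and $10$-faces. One small slip: for $d(C)=9$, Claim~\ref{lemma-C11} already gives $t(C)\le 4$, not $5$, so only the case $t(C)=4$ needs to be excluded there, as your own parenthetical correctly says.
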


Again, we omit the proof since it is identical to that in \cite{KLuo25}.

We are now ready to complete the proof by the discharging method. Assign each vertex $v$ the initial charge
\[
\omega(v)=2d(v)-6,
\]
and each face $f$ the initial charge
\[
\omega(f)=d(f)-6.
\]
By Euler's formula,
\[
|V(G)|-|E(G)|+|F(G)|=2,
\]
we obtain
\begin{equation*}
\sum_{x\in V(G)\cup F(G)}\omega(x)
=
\sum_{v\in V(G)}(2d(v)-6)
+
\sum_{f\in F(G)}(d(f)-6)
=
-12.
\end{equation*}

We redistribute the initial charges according to the following rule.

\medskip
\noindent\textbf{Discharging Rule}

\medskip
\noindent
(R)
Each $9^+$-face sends one unit of charge to each incident $2$-vertex and to each adjacent $3$-face.

\medskip

Let $\omega'(x)$ denote the final charge of each vertex or face $x$. We show that $\omega'(x)\ge0$ for every $x\in V(G)\cup F(G)$.

If $x$ is a $2$-vertex, then it receives one unit of charge from each of its two incident $9^+$-faces by (R). Hence
\[
\omega'(x)=2\cdot2-6+2=0.
\]
If $x$ is a $3$-vertex, then it neither sends nor receives charge, and thus
\[
\omega'(x)=\omega(x)=0.
\]

Now let $f$ be a face of $G$.
If $d(f)\ge9$, then Claim~\ref{claim-main} implies that $t(f)\le d(f)-6$. Therefore,
\[
\omega'(f)
=
d(f)-6-t(f)
\ge0.
\]

If $d(f)=3$, then $f$ receives one unit of charge from each of its three adjacent $9^+$-faces, and hence
\[
\omega'(f)
=
3-6+3
=
0.
\]

Thus every vertex and every face has nonnegative final charge. Consequently,
\[
0
\le
\sum_{x\in V(G)\cup F(G)}\omega'(x)
=
\sum_{x\in V(G)\cup F(G)}\omega(x)
=
-12,
\]
a contradiction.

This completes the proof of Theorem~\ref{main-thm}.
\section{Proofs of Claims~\ref{lem-H234} and~\ref{claim-F-one}}
\label{sec:reducibility}

In this section, we prove Claims~\ref{lem-H234} and~\ref{claim-F-one}.

\subsection{Proof of Claim~\ref{lem-H234}: Reducible configurations on $10$-cycles}

We first prove Claim~\ref{lem-H234} by showing that none of the configurations $H_1$, $H_2$, $H_3$, and $H_4$ can occur in $G$.

\begin{figure}[htbp]
\begin{center}
\subfigure{\begin{tikzpicture}
[u/.style={fill=black, minimum size =3pt,ellipse,inner sep=1pt},node distance=1.5cm,scale=1.2]
\node[u] (v1) at (108:1){};
\node[u] (v2) at (72:1){};
\node[u] (v3) at (36:1){};
\node[u] (v4) at (0:1){};
\node[u] (v5) at (324:1){};
\node[u] (v6) at (288:1){};
\node[u] (v7) at (252:1){};
\node[u] (v8) at (216:1){};
\node[u] (v9) at (180:1){};
\node[u] (v10) at (144:1){};
\node[u] (v11) at (90:1.5){};
\node[u] (v12) at (18:1.5){};
\node[u] (v13) at (306:1.5){};
\node[u] (v14) at (234:1.5){};
\node[u] (v15) at (162:1.5){};
\node (C10) at (0, 0){$C_{10}$};

\draw   (0,0) circle[radius=1cm];
  \draw (v1) -- (v11);
  \draw (v2) -- (v11);
  \draw (v3) -- (v12);
  \draw (v4) -- (v12);
  \draw (v5) -- (v13);
  \draw (v6) -- (v13);
  \draw (v7) -- (v14);
  \draw (v8) -- (v14);
  \draw (v9) -- (v15);
  \draw (v10) -- (v15);

   \node[left=0.05cm, above=-0.01cm, font=\scriptsize] at (v1) {$v_1$};
   \node[ right=0.05cm, above=-0.01cm,  font=\scriptsize] at (v2) {$v_2$};
   \node[above=0.05cm, font=\scriptsize] at (v3) {$v_3$};
   \node[right=0.01cm, font=\scriptsize] at (v4) {$v_4$};
    \node[below=0.2cm,right=0.01cm, font=\scriptsize] at (v5) {$v_5$};
   \node[below=0.1cm,font=\scriptsize] at (v6) {$v_6$};
   \node[right=0.05cm, below=0.05cm,  font=\scriptsize] at (v7) {$v_7$};
   \node[left=-0.1cm, font=\scriptsize] at (v8) {$v_8$};
    \node[left=-0.05cm,font=\scriptsize] at (v9) {$v_9$};
   \node[left=0.1cm, above=0.05cm,  font=\scriptsize] at (v10) {$v_{10}$};
    \node[above=0.02cm, font=\scriptsize] at (v11) {$v_{11}$};
    \node[above=0.01cm,  font=\scriptsize] at (v12) {$v_{12}$};
    \node[below=0.02cm, font=\scriptsize] at (v13) {$v_{13}$};
     \node[below=0.02cm,  font=\scriptsize] at (v14) {$v_{14}$};
    \node[above=0.02cm, font=\scriptsize] at (v15) {$v_{15}$};

 \node[below=0.05cm,  font=\scriptsize] at (v1) {};
\node[below=0.05cm,  font=\scriptsize] at (v2) {};
\node[left=0.05cm,font=\scriptsize] at (v3) {};
\node[above=0.05cm, left=0.05cm,font=\scriptsize] at (v4) {};
\node[left=0.05cm,font=\scriptsize] at (v5) {};
\node[above=0.05cm, font=\scriptsize] at (v6) {};
\node[above=0.05cm, font=\scriptsize] at (v7) {};
\node[right=0.05cm, font=\scriptsize] at (v8) {};
\node[right=0.05cm, font=\scriptsize] at (v9) {};
\node[right=0.05cm,  font=\scriptsize] at (v10) {};
\node[right=0.1cm, font=\scriptsize] at (v11) {};
\node[above=0.1cm, font=\scriptsize] at (v12) {};
\node[right=0.1cm, font=\scriptsize] at (v13) {};
\node[left=0.1cm, font=\scriptsize] at (v14) {};
\node[below=0.1cm, font=\scriptsize] at (v15) {};
\node[left=0.4cm,below=1cm] at (v6) {\small $H_1$};
 \end{tikzpicture}}
\subfigure{
\begin{tikzpicture}
[u/.style={fill=black, minimum size =3pt,ellipse,inner sep=1pt},node distance=1.5cm,scale=1.2]
\node[u] (v1) at (108:1){};
\node[u] (v2) at (72:1){};
\node[u] (v3) at (36:1){};
\node[u] (v4) at (0:1){};
\node[u] (v5) at (324:1){};
\node[u] (v6) at (288:1){};
\node[u] (v7) at (252:1){};
\node[u] (v8) at (216:1){};
\node[u] (v9) at (180:1){};
\node[u] (v10) at (144:1){};
\node[u] (v11) at (90:1.5){};
\node[u] (v12) at (342:1.5){};
\node[u] (v13) at (234:1.5){};
\node (C10) at (0, 0){$C_{10}$};

\draw   (0,0) circle[radius=1cm];
  \draw (v1) -- (v11);
  \draw (v2) -- (v11);
  \draw (v4) -- (v12);
  \draw (v5) -- (v12);
  \draw (v7) -- (v13);
  \draw (v8) -- (v13);

\node[u] (w6) at (288:1.4){};
\node[u] (w10) at (144:1.4){};
  \draw (v6) -- (w6);
  \draw (v10) -- (w10);

   \node[left=0.05cm, above=-0.01cm, font=\scriptsize] at (v1) {$v_1$};
   \node[ right=0.05cm, above=-0.01cm,  font=\scriptsize] at (v2) {$v_2$};
   \node[above=0.03cm,  right=0.01cm,font=\scriptsize] at (v3) {$v_3$};
   \node[right=-0.03cm,font=\scriptsize] at (v4) {$v_4$};
    \node[below=0.2cm,right=-0.05cm, font=\scriptsize] at (v5) {$v_5$};
   \node[right=0.3cm, below=-0.01cm, font=\scriptsize] at (v6) {$v_6$};
   \node[right=0.05cm, below=-0.05cm,  font=\scriptsize] at (v7) {$v_7$};
   \node[left=-0.05cm, font=\scriptsize] at (v8) {$v_8$};
    \node[left=-0.05cm,font=\scriptsize] at (v9) {$v_9$};
   \node[left=0.1cm, above=0.05cm,  font=\scriptsize] at (v10) {$v_{10}$};
    \node[above=0.05cm,  above=-0.01cm, font=\scriptsize] at (v11) {$v_{11}$};
    \node[below=0.05cm,  font=\scriptsize] at (v12) {$v_{12}$};
    \node[below=0.02cm, font=\scriptsize] at (v13) {$v_{13}$};

 \node[below=0.05cm,  font=\scriptsize] at (v1) {};
\node[below=0.05cm,  font=\scriptsize] at (v2) {};
\node[left=0.05cm,font=\scriptsize] at (v3) {};
\node[above=0.05cm, left=0.05cm,font=\scriptsize] at (v4) {};
\node[left=0.05cm,font=\scriptsize] at (v5) {};
\node[above=0.05cm, font=\scriptsize] at (v6) {};
\node[above=0.05cm, font=\scriptsize] at (v7) {};
\node[right=0.05cm, font=\scriptsize] at (v8) {};
\node[right=0.05cm, font=\scriptsize] at (v9) {};
\node[right=0.05cm,  font=\scriptsize] at (v10) {};
\node[right=0.1cm, font=\scriptsize] at (v11) {};
\node[above=0.1cm, font=\scriptsize] at (v12) {};
 \node[left=0.4cm,below=1cm]  at (v6) {\small $H_2$};
 \end{tikzpicture}}
\subfigure{\begin{tikzpicture}
[u/.style={fill=black, minimum size =3pt,ellipse,inner sep=1pt},node distance=1.5cm,scale=1.2]
\node[u] (v1) at (108:1){};
\node[u] (v2) at (72:1){};
\node[u] (v3) at (36:1){};
\node[u] (v4) at (0:1){};
\node[u] (v5) at (324:1){};
\node[u] (v6) at (288:1){};
\node[u] (v7) at (252:1){};
\node[u] (v8) at (216:1){};
\node[u] (v9) at (180:1){};
\node[u] (v10) at (144:1){};
\node[u] (v11) at (90:1.5){};
\node[u] (v12) at (342:1.5){};

\node (C10) at (0, 0){$C_{10}$};

\draw   (0,0) circle[radius=1cm];
  \draw (v1) -- (v11);
  \draw (v2) -- (v11);
  \draw (v4) -- (v12);
  \draw (v5) -- (v12);

\node[u] (w6) at (288:1.4){};
\node[u] (w8) at (216:1.4){};
\node[u] (w10) at (144:1.4){};

  \draw (v6) -- (w6);
  \draw (v8) -- (w8);
  \draw (v10) -- (w10);

       \node[left=0.05cm, above=-0.01cm, font=\scriptsize] at (v1) {$v_1$};
   \node[ right=0.05cm, above=-0.01cm,  font=\scriptsize] at (v2) {$v_2$};
   \node[above=0.05cm, right=0.01cm, font=\scriptsize] at (v3) {$v_3$};
   \node[right=-0.05cm,font=\scriptsize] at (v4) {$v_4$};
    \node[below=0.2cm,right=-0.08cm, font=\scriptsize] at (v5) {$v_5$};
   \node[right=0.3cm, below=-0.01cm, font=\scriptsize] at (v6) {$v_6$};
   \node[right=0.05cm, below=0.01cm,  font=\scriptsize] at (v7) {$v_7$};
   \node[left=-0.05cm, font=\scriptsize] at (v8) {$v_8$};
    \node[left=-0.02cm,font=\scriptsize] at (v9) {$v_9$};
   \node[left=0.1cm, above=0.05cm,  font=\scriptsize] at (v10) {$v_{10}$};
    \node[above=0.02cm, font=\scriptsize] at (v11) {$v_{11}$};
    \node[below=0.03cm,font=\scriptsize] at (v12) {$v_{12}$};

 \node[below=0.05cm,  font=\scriptsize] at (v1) {};
\node[below=0.05cm,  font=\scriptsize] at (v2) {};
\node[left=0.05cm,font=\scriptsize] at (v3) {};
\node[above=0.05cm, left=0.05cm,font=\scriptsize] at (v4) {};
\node[left=0.05cm,font=\scriptsize] at (v5) {};
\node[above=0.05cm, font=\scriptsize] at (v6) {};
\node[above=0.05cm, font=\scriptsize] at (v7) {};
\node[right=0.05cm, font=\scriptsize] at (v8) {};
\node[right=0.05cm, font=\scriptsize] at (v9) {};
\node[right=0.05cm,  font=\scriptsize] at (v10) {};
\node[right=0.1cm, font=\scriptsize] at (v11) {};
\node[above=0.1cm, font=\scriptsize] at (v12) {};

 \node[left=0.4cm,below=1cm] at (v6) { \small  $H_3$};
 \end{tikzpicture}}
\subfigure{\begin{tikzpicture}
[u/.style={fill=black, minimum size =3pt,ellipse,inner sep=1pt},node distance=1.5cm,scale=1.2]
\node[u] (v1) at (108:1){};
\node[u] (v2) at (72:1){};
\node[u] (v3) at (36:1){};
\node[u] (v4) at (0:1){};
\node[u] (v5) at (324:1){};
\node[u] (v6) at (288:1){};
\node[u] (v7) at (252:1){};
\node[u] (v8) at (216:1){};
\node[u] (v9) at (180:1){};
\node[u] (v10) at (144:1){};
\node (C10) at (0, 0){$C_{10}$};

\draw   (0,0) circle[radius=1cm];

 \node[u] (w2) at (72:1.4){};
\node[u] (w4) at (0:1.4){};
\node[u] (w6) at (288:1.4){};
\node[u] (w8) at (216:1.4){};
\node[u] (w10) at (144:1.4){};
  \draw (v2) -- (w2);
  \draw (v4) -- (w4);
  \draw (v6) -- (w6);
  \draw (v8) -- (w8);
  \draw (v10) -- (w10);

    \node[left=0.05cm, above=-0.01cm, font=\scriptsize] at (v1) {$v_1$};
   \node[left=0.1cm, above=0.01cm,  font=\scriptsize] at (v2) {$v_2$};
   \node[above=0.05cm, right=0.01cm, font=\scriptsize] at (v3) {$v_3$};
   \node[right=0.2cm, above=-0.01cm, font=\scriptsize] at (v4) {$v_4$};
    \node[below=0.1cm,right=-0.01cm, font=\scriptsize] at (v5) {$v_5$};
   \node[right=0.25cm, below=-0.03cm, font=\scriptsize] at (v6) {$v_6$};
   \node[right=0.05cm, below=0.01cm,  font=\scriptsize] at (v7) {$v_7$};
   \node[left=-0.01cm, font=\scriptsize] at (v8) {$v_8$};
    \node[left=-0.01cm,font=\scriptsize] at (v9) {$v_9$};
   \node[left=0.1cm, above=0.05cm,  font=\scriptsize] at (v10) {$v_{10}$};

 \node[below=0.05cm,  font=\scriptsize] at (v1) {};
\node[below=0.05cm,  font=\scriptsize] at (v2) {};
\node[left=0.05cm,font=\scriptsize] at (v3) {};
\node[above=0.05cm, left=0.05cm,font=\scriptsize] at (v4) {};
\node[left=0.05cm,font=\scriptsize] at (v5) {};
\node[above=0.05cm, font=\scriptsize] at (v6) {};
\node[above=0.05cm, font=\scriptsize] at (v7) {};
\node[right=0.05cm, font=\scriptsize] at (v8) {};
\node[right=0.05cm, font=\scriptsize] at (v9) {};
\node[right=0.05cm,  font=\scriptsize] at (v10) {};
\node[right=0.1cm, font=\scriptsize] at (v11) {};
\node[above=0.1cm, font=\scriptsize] at (v12) {};

\node[left=0.4cm,below=1cm] at (v6) {\small $H_4$};
 \end{tikzpicture} }
\end{center}
\caption{The four reducible configurations on $10$-cycles.}
\label{H234}
\end{figure}


\noindent\textbf{\textit{Proof of Claim~\ref{lem-H234}.}}
Suppose, to the contrary, that $G$ contains one of the configurations $H_i$, where $i\in\{1,2,3,4\}$.

For each $i\in\{1,2,3,4\}$, let $G_1=G-V(H_i)$. Since no vertex of $H_i$ has more than one neighbor outside $H_i$, it follows that, for any two vertices $u,v\in V(G_1)$,
\[
d_G(u,v)\le2 \quad\Longrightarrow\quad d_{G_1}(u,v)\le2.
\]
Moreover, because $G$ contains no $4$- to $8$-cycles, any two $3$-vertices of $H_i$ that do not lie on the $10$-cycle are at distance at least $3$ in $G$. 

\begin{enumerate}[(a)]
\item \textbf{Case $H_i=H_1$.}

Assume that $G$ contains $H_1$ (see Figure~\ref{H234}). Let $W_1$ be the subgraph of $H_1$ induced by
\[
V(H_1)\setminus\{v_{12},v_{13},v_{14},v_{15}\}.
\]
Thus,
\[
V(W_1)=\{v_1,v_2,\ldots,v_{11}\}.
\]

For each vertex $v\in V(W_1)$, define
\[
f(v)=6-t_v,
\]
where
\[
t_v=\bigl|N_{G^2}(v)\setminus V(W_1)\bigr|.
\]
Then (see Figure~\ref{C10-H1-H2}(a))
\[
f(v_i)=
\begin{cases}
3,& i\in\{4,5,6,7,8,9,11\},\\[2pt]
4,& i\in\{1,2,3,10\}.
\end{cases}
\]

The graph polynomial of $W_1^2$ is
\begin{eqnarray*}
P_{W_1^2}(\bm{x})
&=&\prod_{u\sim v,\;u<v}(x_u-x_v)\\
&=&(x_1-x_2)(x_1-x_3)(x_1-x_9)(x_1-x_{10})(x_1-x_{11})
(x_2-x_3)(x_2-x_4)(x_2-x_{10})\\
&&
(x_2-x_{11})(x_3-x_4)(x_3-x_5)(x_3-x_{11})
(x_4-x_5)(x_4-x_6)(x_5-x_6)(x_5-x_7)\\
&&
(x_6-x_7)(x_6-x_8)(x_7-x_8)(x_7-x_9)
(x_8-x_9)(x_8-x_{10})(x_9-x_{10})(x_{10}-x_{11}).
\end{eqnarray*}

A direct computation in \textsc{Mathematica} shows that the coefficient of
\[
x_1^2x_2^3x_3^3x_4x_5^2x_6^2x_7^2x_8^2x_9^2x_{10}^3x_{11}^2
\]
is $1$. Hence $W_1^2$ is Alon--Tarsi $f$-choosable, a contradiction to Claim~\ref{key-remark}. Therefore $W_1$ cannot occur in $G$, and consequently neither can $H_1$.

\begin{figure*}[htbp]
\begin{multicols}{2}
\begin{center}
\begin{tikzpicture}
[u/.style={fill=black, minimum size =3pt,ellipse,inner sep=1pt},node distance=1.5cm,scale=1.5]
\node[u] (v1) at (108:1){};
\node[u] (v2) at (72:1){};
\node[u] (v3) at (36:1){};
\node[u] (v4) at (0:1){};
\node[u] (v5) at (324:1){};
\node[u] (v6) at (288:1){};
\node[u] (v7) at (252:1){};
\node[u] (v8) at (216:1){};
\node[u] (v9) at (180:1){};
\node[u] (v10) at (144:1){};
\node[u] (v11) at (90:1.5){};
\node[u] (v12) at (18:1.5){};
\node[u] (v13) at (306:1.5){};
\node[u] (v14) at (234:1.5){};
\node[u] (v15) at (162:1.5){};
\node (C10) at (0, 0){$C_{10}$};

\draw   (0,0) circle[radius=1cm];
  \draw (v1) -- (v11);
  \draw (v2) -- (v11);
  \draw (v3) -- (v12);
  \draw (v4) -- (v12);
  \draw (v5) -- (v13);
  \draw (v6) -- (v13);
  \draw (v7) -- (v14);
  \draw (v8) -- (v14);
  \draw (v9) -- (v15);
  \draw (v10) -- (v15);

   \node[left=0.05cm, above=0.05cm, font=\small] at (v1) {$v_1$};
   \node[ right=0.05cm, above=0.05cm,  font=\small] at (v2) {$v_2$};
   \node[above=0.05cm, font=\small] at (v3) {$v_3$};
   \node[right=0.05cm,font=\small] at (v4) {$v_4$};
    \node[below=0.2cm,right=0.01cm, font=\small] at (v5) {$v_5$};
   \node[below=0.1cm,font=\small] at (v6) {$v_6$};
   \node[right=0.05cm, below=0.05cm,  font=\small] at (v7) {$v_7$};
   \node[left=0.1cm, font=\small] at (v8) {$v_8$};
    \node[left=0.05cm,font=\small] at (v9) {$v_9$};
   \node[left=0.1cm, above=0.05cm,  font=\small] at (v10) {$v_{10}$};
    \node[above=0.05cm, font=\small] at (v11) {$v_{11}$};
    \node[above=0.05cm,  font=\small] at (v12) {$v_{12}$};
    \node[below=0.05cm, font=\small] at (v13) {$v_{13}$};
     \node[below=0.05cm,  font=\small] at (v14) {$v_{14}$};
    \node[above=0.05cm, font=\small] at (v15) {$v_{15}$};

 \node[below=0.05cm,  font=\scriptsize] at (v1) {\textcolor{blue}{4}};
\node[below=0.05cm,  font=\scriptsize] at (v2) {\textcolor{blue}{4}};
\node[left=0.05cm,font=\scriptsize] at (v3) {\textcolor{blue}{4}};
\node[above=0.05cm, left=0.05cm,font=\scriptsize] at (v4) {\textcolor{blue}{3}};
\node[left=0.05cm,font=\scriptsize] at (v5) {\textcolor{blue}{3}};
\node[above=0.05cm, font=\scriptsize] at (v6) {\textcolor{blue}{3}};
\node[above=0.05cm, font=\scriptsize] at (v7) {\textcolor{blue}{3}};
\node[right=0.05cm, font=\scriptsize] at (v8) {\textcolor{blue}{3}};
\node[right=0.05cm, font=\scriptsize] at (v9) {\textcolor{blue}{3}};
\node[right=0.05cm,  font=\scriptsize] at (v10) {\textcolor{blue}{4}};
\node[right=0.1cm, font=\scriptsize] at (v11) {\textcolor{blue}{3}};
\node[right=0.1cm, font=\scriptsize] at (v12) {};
\node[right=0.1cm, font=\scriptsize] at (v13) {};
\node[left=0.1cm, font=\scriptsize] at (v14) {};
\node[below=0.1cm, font=\scriptsize] at (v15) {};
\node[left=0.4cm,below=1cm] at (v6) {\small (a) $f(v)$ of $W_1^2$ (subgraph $H_1$)};
 \end{tikzpicture}
\end{center}
\par
\begin{center}
\begin{tikzpicture}
[u/.style={fill=black, minimum size =3pt,ellipse,inner sep=1pt},node distance=1.5cm,scale=1.5]
\node[u] (v1) at (108:1){};
\node[u] (v2) at (72:1){};
\node[u] (v3) at (36:1){};
\node[u] (v4) at (0:1){};
\node[u] (v5) at (324:1){};
\node[u] (v6) at (288:1){};
\node[u] (v7) at (252:1){};
\node[u] (v8) at (216:1){};
\node[u] (v9) at (180:1){};
\node[u] (v10) at (144:1){};
\node[u] (v11) at (90:1.5){};
\node[u] (v12) at (342:1.5){};
\node[u] (v13) at (234:1.5){};
\node (C10) at (0, 0){$C_{10}$};

\draw   (0,0) circle[radius=1cm];
  \draw (v1) -- (v11);
  \draw (v2) -- (v11);
  \draw (v4) -- (v12);
  \draw (v5) -- (v12);
  \draw (v7) -- (v13);
  \draw (v8) -- (v13);

\node[u] (w6) at (288:1.4){};
\node[u] (w10) at (144:1.4){};
  \draw (v6) -- (w6);
  \draw (v10) -- (w10);

   \node[left=0.05cm, above=0.05cm, font=\small] at (v1) {$v_1$};
   \node[ right=0.05cm, above=0.05cm,  font=\small] at (v2) {$v_2$};
   \node[above=0.05cm, font=\small] at (v3) {$v_3$};
   \node[right=0.05cm,font=\small] at (v4) {$v_4$};
    \node[below=0.2cm,right=0.01cm, font=\small] at (v5) {$v_5$};
   \node[right=0.3cm, below=0.01cm, font=\small] at (v6) {$v_6$};
   \node[right=0.05cm, below=0.05cm,  font=\small] at (v7) {$v_7$};
   \node[left=0.1cm, font=\small] at (v8) {$v_8$};
    \node[left=0.05cm,font=\small] at (v9) {$v_9$};
   \node[left=0.1cm, above=0.05cm,  font=\small] at (v10) {$v_{10}$};
    \node[above=0.05cm, font=\small] at (v11) {$v_{11}$};
    \node[below=0.05cm,  font=\small] at (v12) {$v_{12}$};
    \node[below=0.05cm, font=\small] at (v13) {$v_{13}$};

 \node[below=0.05cm,  font=\scriptsize] at (v1) {};
\node[below=0.05cm,  font=\scriptsize] at (v2) {};
\node[left=0.05cm,font=\scriptsize] at (v3) {\textcolor{blue}{3}};
\node[above=0.05cm, left=0.05cm,font=\scriptsize] at (v4) {\textcolor{blue}{4}};
\node[left=0.05cm,font=\scriptsize] at (v5) {\textcolor{blue}{4}};
\node[above=0.05cm, font=\scriptsize] at (v6) {\textcolor{blue}{3}};
\node[above=0.05cm, font=\scriptsize] at (v7) {\textcolor{blue}{4}};
\node[right=0.05cm, font=\scriptsize] at (v8) {\textcolor{blue}{4}};
\node[right=0.05cm, font=\scriptsize] at (v9) {\textcolor{blue}{3}};
\node[right=0.05cm,  font=\scriptsize] at (v10) {};
\node[right=0.1cm, font=\scriptsize] at (v11) {};
\node[above=0.1cm, font=\scriptsize] at (v12) {\textcolor{blue}{3}};
\node[left=0.1cm, font=\scriptsize] at (v13) {\textcolor{blue}{3}};

\node[left=0.4cm,below=1cm] at (v6) {\small (b) $f(v)$ of $W_2^2$ (subgraph $H_2$)};
 \end{tikzpicture}
\end{center}
\end{multicols} 
\caption{The numbers indicate the values of $f(v)$ at the corresponding vertices.}
\label{C10-H1-H2}
\end{figure*}
\item \textbf{Case $H_i=H_2$.}

Assume that $G$ contains $H_2$ (see Figure~\ref{H234}). Let $W_2$ be the subgraph of $H_2$ induced by
\[
V(H_2)\setminus\{v_1,v_2,v_{10},v_{11}\}.
\]
Thus,
\[
V(W_2)=\{v_3,v_4,\ldots,v_9,v_{12},v_{13}\}.
\]

For each vertex $v\in V(W_2)$, define
\[
f(v)=6-t_v,
\]
where
\[
t_v=\bigl|N_{G^2}(v)\setminus V(W_2)\bigr|.
\]
Then (see Figure~\ref{C10-H1-H2}(b))
\[
f(v_i)=
\begin{cases}
3,& i\in\{3,6,9,12,13\},\\[2pt]
4,& i\in\{4,5,7,8\}.
\end{cases}
\]

The graph polynomial of $W_2^2$ is
\begin{eqnarray*}
P_{W_2^2}(\bm{x})
&=&\prod_{u\sim v,\;u<v}(x_u-x_v)\\
&=&(x_3-x_4)(x_3-x_5)(x_3-x_{12})
(x_4-x_5)(x_4-x_6)(x_4-x_{12})(x_5-x_6)\\
&&
(x_5-x_7)(x_5-x_{12})(x_6-x_7)(x_6-x_8)
(x_6-x_{12})(x_6-x_{13})(x_7-x_8)\\
&&
(x_7-x_9)(x_7-x_{13})(x_8-x_9)
(x_8-x_{13})(x_9-x_{13}).
\end{eqnarray*}

A direct computation in \textsc{Mathematica} shows that the coefficient of
\[
x_3^2x_4^3x_5^2x_6^2x_7^2x_8^3x_9^2x_{12}^2x_{13}
\]
is $1$. Hence $W_2^2$ is Alon--Tarsi $f$-choosable, a contradiction to  Claim~\ref{key-remark}.
 Therefore $W_2$ cannot occur in $G$, and consequently neither can $H_2$.


\item \textbf{Case $H_i=H_3$.}

Assume that $G$ contains $H_3$ (see Figure~\ref{H234}). For each vertex $v\in V(H_3)$, define
\[
f(v)=6-t_v,
\]
where
\[
t_v=\bigl|N_{G^2}(v)\setminus V(H_3)\bigr|.
\]
Then (see Figure~\ref{H3-H4-color})
\[
f(v_i)=
\begin{cases}
3,& i\in\{6,8,10,11,12\},\\[2pt]
4,& i\in\{1,5,7,9\},\\[2pt]
5,& i\in\{2,4\},\\[2pt]
6,& i=3.
\end{cases}
\]

The graph polynomial of $H_3^2$ is
\begin{eqnarray*}
P_{H_3^2}(\bm{x})
&=&\prod_{u\sim v,\;u<v}(x_u-x_v)\\
&=&(x_1-x_2)(x_1-x_3)(x_1-x_9)(x_1-x_{10})(x_1-x_{11})
(x_2-x_3)(x_2-x_4)\\
&&
(x_2-x_{10})(x_2-x_{11})(x_3-x_4)(x_3-x_5)
(x_3-x_{11})(x_3-x_{12})(x_4-x_5)\\
&&
(x_4-x_6)(x_4-x_{12})(x_5-x_6)(x_5-x_7)
(x_5-x_{12})(x_6-x_7)(x_6-x_8)\\
&&
(x_6-x_{12})(x_7-x_8)(x_7-x_9)
(x_8-x_9)(x_8-x_{10})(x_9-x_{10})(x_{10}-x_{11}).
\end{eqnarray*}

A direct computation in \textsc{Mathematica} shows that the coefficient of
\[
x_1^3x_2^2x_3^2x_4^3x_5^2x_6^2x_7^3x_8^2x_9^3x_{10}^2x_{11}^2x_{12}^2
\]
is $1$. Hence $H_3^2$ is Alon--Tarsi $f$-choosable, a contradiction to  Claim~\ref{key-remark}.
 Therefore $H_3$ cannot occur in $G$.

\begin{figure*}[htbp]
\begin{multicols}{2}
\begin{center}
\begin{tikzpicture}
[u/.style={fill=black, minimum size =3pt,ellipse,inner sep=1pt},node distance=1.5cm,scale=1.5]
\path[use as bounding box] (-1.75,-1.45) rectangle (1.75,1.75);
\node[u] (v1) at (108:1){};
\node[u] (v2) at (72:1){};
\node[u] (v3) at (36:1){};
\node[u] (v4) at (0:1){};
\node[u] (v5) at (324:1){};
\node[u] (v6) at (288:1){};
\node[u] (v7) at (252:1){};
\node[u] (v8) at (216:1){};
\node[u] (v9) at (180:1){};
\node[u] (v10) at (144:1){};
\node[u] (v11) at (90:1.5){};
\node[u] (v12) at (342:1.5){};

\node (C10) at (0, 0){$C_{10}$};

\draw   (0,0) circle[radius=1cm];
  \draw (v1) -- (v11);
  \draw (v2) -- (v11);
  \draw (v4) -- (v12);
  \draw (v5) -- (v12);

\node[u] (w6) at (288:1.4){};
\node[u] (w8) at (216:1.4){};
\node[u] (w10) at (144:1.4){};

  \draw (v6) -- (w6);
  \draw (v8) -- (w8);
  \draw (v10) -- (w10);

       \node[left=0.05cm, above=-0.01cm, font=\scriptsize] at (v1) {$v_1$};
   \node[ right=0.05cm, above=-0.01cm,  font=\scriptsize] at (v2) {$v_2$};
   \node[above=0.05cm, right=0.01cm, font=\scriptsize] at (v3) {$v_3$};
   \node[right=-0.05cm,font=\scriptsize] at (v4) {$v_4$};
    \node[below=0.2cm,right=-0.08cm, font=\scriptsize] at (v5) {$v_5$};
   \node[right=0.3cm, below=-0.01cm, font=\scriptsize] at (v6) {$v_6$};
   \node[right=0.05cm, below=0.01cm,  font=\scriptsize] at (v7) {$v_7$};
   \node[left=-0.05cm, font=\scriptsize] at (v8) {$v_8$};
    \node[left=-0.02cm,font=\scriptsize] at (v9) {$v_9$};
   \node[left=0.1cm, above=0.05cm,  font=\scriptsize] at (v10) {$v_{10}$};
    \node[above=0.02cm, font=\scriptsize] at (v11) {$v_{11}$};
    \node[below=0.03cm,font=\scriptsize] at (v12) {$v_{12}$};

 \node[below=0.05cm,  font=\scriptsize] at (v1) {\textcolor{blue}{4}};
\node[below=0.05cm,  font=\scriptsize] at (v2) {\textcolor{blue}{5}};
\node[left=0.05cm,font=\scriptsize] at (v3) {\textcolor{blue}{6}};
\node[above=0.05cm, left=0.05cm,font=\scriptsize] at (v4) {\textcolor{blue}{5}};
\node[left=0.05cm,font=\scriptsize] at (v5) {\textcolor{blue}{4}};
\node[above=0.05cm, font=\scriptsize] at (v6) {\textcolor{blue}{3}};
\node[above=0.05cm, font=\scriptsize] at (v7) {\textcolor{blue}{4}};
\node[right=0.05cm, font=\scriptsize] at (v8) {\textcolor{blue}{3}};
\node[right=0.05cm, font=\scriptsize] at (v9) {\textcolor{blue}{4}};
\node[right=0.05cm,  font=\scriptsize] at (v10) {\textcolor{blue}{3}};
\node[right=0.1cm, font=\scriptsize] at (v11) {\textcolor{blue}{3}};
\node[above=0.1cm, font=\scriptsize] at (v12) {\textcolor{blue}{3}};
\node[left=0.4cm,below=1cm] at (v6) {\small Subgraph $H_3$};
 \end{tikzpicture}
 \end{center}
\par
\begin{center}
\begin{tikzpicture}
[u/.style={fill=black, minimum size =3pt,ellipse,inner sep=1pt},node distance=1.5cm,scale=1.5]
\path[use as bounding box] (-1.75,-1.45) rectangle (1.75,1.75);
\node[u] (v1) at (108:1){};
\node[u] (v2) at (72:1){};
\node[u] (v3) at (36:1){};
\node[u] (v4) at (0:1){};
\node[u] (v5) at (324:1){};
\node[u] (v6) at (288:1){};
\node[u] (v7) at (252:1){};
\node[u] (v8) at (216:1){};
\node[u] (v9) at (180:1){};
\node[u] (v10) at (144:1){};
\node (C10) at (0, 0){$C_{10}$};

\draw   (0,0) circle[radius=1cm];

 \node[u] (w2) at (72:1.4){};
\node[u] (w4) at (0:1.4){};
\node[u] (w6) at (288:1.4){};
\node[u] (w8) at (216:1.4){};
\node[u] (w10) at (144:1.4){};
  \draw (v2) -- (w2);
  \draw (v4) -- (w4);
  \draw (v6) -- (w6);
  \draw (v8) -- (w8);
  \draw (v10) -- (w10);

    \node[left=0.05cm, above=-0.01cm, font=\scriptsize] at (v1) {$v_1$};
   \node[left=0.1cm, above=0.01cm,  font=\scriptsize] at (v2) {$v_2$};
   \node[above=0.05cm, right=0.01cm, font=\scriptsize] at (v3) {$v_3$};
   \node[right=0.2cm, above=-0.01cm, font=\scriptsize] at (v4) {$v_4$};
    \node[below=0.1cm,right=-0.01cm, font=\scriptsize] at (v5) {$v_5$};
   \node[right=0.25cm, below=-0.03cm, font=\scriptsize] at (v6) {$v_6$};
   \node[right=0.05cm, below=0.01cm,  font=\scriptsize] at (v7) {$v_7$};
   \node[left=-0.01cm, font=\scriptsize] at (v8) {$v_8$};
    \node[left=-0.01cm,font=\scriptsize] at (v9) {$v_9$};
   \node[left=0.1cm, above=0.05cm,  font=\scriptsize] at (v10) {$v_{10}$};

 \node[below=0.05cm,  font=\scriptsize] at (v1) {\textcolor{blue}{4}};
\node[below=0.05cm,  font=\scriptsize] at (v2) {\textcolor{blue}{3}};
\node[left=0.05cm,font=\scriptsize] at (v3) {\textcolor{blue}{4}};
\node[above=-0.05cm, left=0.05cm,font=\scriptsize] at (v4) {\textcolor{blue}{3}};
\node[left=0.05cm,font=\scriptsize] at (v5) {\textcolor{blue}{4}};
\node[above=0.05cm, font=\scriptsize] at (v6) {\textcolor{blue}{3}};
\node[above=0.05cm, font=\scriptsize] at (v7) {\textcolor{blue}{4}};
\node[right=0.02cm, font=\scriptsize] at (v8) {\textcolor{blue}{3}};
\node[right=0.02cm, font=\scriptsize] at (v9) {\textcolor{blue}{4}};
\node[right=0.05cm,  font=\scriptsize] at (v10) {\textcolor{blue}{3}};

\node[left=0.4cm,below=1cm] at (v6) {\small Subgraph $H_4$};
 \end{tikzpicture}
 \end{center}
\end{multicols} 
\caption{The numbers indicate the values of $f(v)$ at the corresponding vertices.}
\label{H3-H4-color}
\end{figure*}

\item \textbf{Case $H_i=H_4$.}

Assume that $G$ contains $H_4$ (see Figure~\ref{H234}). For each vertex $v\in V(H_4)$, define
\[
f(v)=6-t_v,
\]
where
\[
t_v=\bigl|N_{G^2}(v)\setminus V(H_4)\bigr|.
\]
Then (see Figure~\ref{H3-H4-color})
\[
f(v_i)=
\begin{cases}
3,& i\in\{2,4,6,8,10\},\\[2pt]
4,& i\in\{1,3,5,7,9\}.
\end{cases}
\]

The graph polynomial of $H_4^2$ is
\begin{eqnarray*}
P_{H_4^2}(\bm{x})
&=&\prod_{u\sim v,\;u<v}(x_u-x_v)\\
&=&(x_1-x_2)(x_1-x_3)(x_1-x_9)(x_1-x_{10})
(x_2-x_3)(x_2-x_4)(x_2-x_{10})\\
&&
(x_3-x_4)(x_3-x_5)(x_4-x_5)(x_4-x_6)
(x_5-x_6)(x_5-x_7)(x_6-x_7)\\
&&
(x_6-x_8)(x_7-x_8)(x_7-x_9)
(x_8-x_9)(x_8-x_{10})(x_9-x_{10}).
\end{eqnarray*}

A direct computation in \textsc{Mathematica} shows that the coefficient of
\[
x_1^3x_2x_3^2x_4^2x_5^2x_6^2x_7^2x_8^2x_9^2x_{10}^2
\]
is $1$. Hence $H_4^2$ is Alon--Tarsi $f$-choosable, a contradiction to  Claim~\ref{key-remark}. 
 Therefore $H_4$ cannot occur in $G$.
\end{enumerate}

This completes the proof of Claim~\ref{lem-H234}.

\begin{figure}
\centering


\begin{minipage}{0.24\textwidth}\centering
\begin{tikzpicture}[u/.style={fill=black, minimum size =3pt,ellipse,inner sep=1pt},node distance=1.5cm,scale=1.2]
\node[u] (v1) at (80:1){};
\node[u] (v2) at (40:1){};
\node[u] (v3) at (0:1){};
\node[u] (v4) at (320:1){};
\node[u] (v5) at (280:1){};
\node[u] (v6) at (240:1){};
\node[u] (v7) at (200:1){};
\node[u] (v8) at (160:1){};
\node[u] (v9) at (120:1){};
\node (C9) at (0, 0){$C_9$};

\draw   (0,0) circle[radius=1cm];

  \draw (v1) -- (80:1.4);
  \draw (v3) -- (0:1.4);
  \draw (v5) -- (280:1.4);
  \draw (v7) -- (200:1.4);
  \draw (v8) -- (160:1.4);

   \node[left=0.1cm,  below=0.05cm,  font=\small] at (v1) {$v_1$};
   \node[right=0.05cm,font=\small] at (v2) {$v_2$};
   \node[right=0.2cm,above= 0.01cm, font=\small] at (v3) {$v_3$};
   \node[right=0.05cm,font=\small] at (v4) {$v_4$};
    \node[below=0.1cm,right=0.01cm, font=\small] at (v5) {$v_5$};
   \node[below=0.1cm,font=\small] at (v6) {$v_6$};
   \node[above=0.05cm, left=0.1cm, font=\small] at (v7) {$v_7$};
   \node[below=0.1cm, left=0.1cm, font=\small] at (v8) {$v_8$};
    \node[above=0.05cm,font=\small] at (v9) {$v_9$};

  \node[left=0.1cm,below=0.7cm,font=\small] at (v5) {$F_1$};

\end{tikzpicture}

\end{minipage}\hfill
%
\begin{minipage}{0.24\textwidth}\centering
\begin{tikzpicture}[u/.style={fill=black, minimum size =3pt,ellipse,inner sep=1pt},node distance=1.5cm,scale=1.2]
\node[u] (v1) at (80:1){};
\node[u] (v2) at (40:1){};
\node[u] (v3) at (0:1){};
\node[u] (v4) at (320:1){};
\node[u] (v5) at (280:1){};
\node[u] (v6) at (240:1){};
\node[u] (v7) at (200:1){};
\node[u] (v8) at (160:1){};
\node[u] (v9) at (120:1){};
\node[u] (v10) at (220:1.5){};
\node (C9) at (0, 0){$C_9$};

\draw   (0,0) circle[radius=1cm];
  \draw (v6) -- (v10);
  \draw (v7) -- (v10);

  \draw (v1) -- (80:1.4);
  \draw (v3) -- (0:1.4);
  \draw (v5) -- (280:1.4);
  \draw (v8) -- (160:1.4);

   \node[left=0.1cm,  below=0.05cm,  font=\small] at (v1) {$v_1$};
   \node[right=0.05cm,font=\small] at (v2) {$v_2$};
   \node[right=0.2cm,above= 0.01cm, font=\small] at (v3) {$v_3$};
   \node[right=0.05cm,font=\small] at (v4) {$v_4$};
    \node[below=0.1cm,right=0.01cm, font=\small] at (v5) {$v_5$};
   \node[below=0.1cm,font=\small] at (v6) {$v_6$};
   \node[above=0.05cm, left=0.1cm, font=\small] at (v7) {$v_7$};
   \node[below=0.1cm, left=0.1cm, font=\small] at (v8) {$v_8$};
    \node[above=0.05cm,font=\small] at (v9) {$v_9$};

     \node[below=0.1cm,  font=\small] at (v10) {$v_{10}$};

\node[left=0.1cm,below=0.7cm,font=\small] at (v5) {$F_2$};

\end{tikzpicture}

\end{minipage}\hfill
%
\begin{minipage}{0.24\textwidth}\centering
\begin{tikzpicture}[u/.style={fill=black, minimum size =3pt,ellipse,inner sep=1pt},node distance=1.5cm,scale=1.2]
\node[u] (v1) at (80:1){};
\node[u] (v2) at (40:1){};
\node[u] (v3) at (0:1){};
\node[u] (v4) at (320:1){};
\node[u] (v5) at (280:1){};
\node[u] (v6) at (240:1){};
\node[u] (v7) at (200:1){};
\node[u] (v8) at (160:1){};
\node[u] (v9) at (120:1){};
\node[u] (v10) at (220:1.5){};
\node (C9) at (0, 0){$C_9$};

\draw   (0,0) circle[radius=1cm];

  \draw (v1) -- (80:1.4);
  \draw (v3) -- (0:1.4);
  \draw (v4) -- (320:1.4);
  \draw (v8) -- (160:1.4);
  \draw (v6) -- (v10);
  \draw (v7) -- (v10);

   \node[above=0.2cm, left=0.01cm, font=\small] at (v1) {$v_1$};
   \node[above=0.1cm,font=\small] at (v2) {$v_2$};
   \node[right=0.2cm,above= 0.1cm, font=\small] at (v3) {$v_3$};
   \node[below=0.1cm,font=\small] at (v4) {$v_4$};
    \node[below=0.1cm, font=\small] at (v5) {$v_5$};
   \node[below=0.1cm,font=\small] at (v6) {$v_6$};
   \node[above=0.05cm, left=0.1cm, font=\small] at (v7) {$v_7$};
   \node[below=0.1cm, left=0.1cm, font=\small] at (v8) {$v_8$};
    \node[above=0.1cm, left=0.01cm, font=\small] at (v9) {$v_9$};
     \node[below=0.1cm,font=\small] at (v10) {$v_{10}$};

\node[left=0.1cm,below=0.7cm,font=\small] at (v5) {$F_3$};

\end{tikzpicture}

\end{minipage}\hfill
%
\begin{minipage}{0.24\textwidth}\centering
\begin{tikzpicture}[u/.style={fill=black, minimum size =3pt,ellipse,inner sep=1pt},node distance=1.5cm,scale=1.2]
\node[u] (v1) at (80:1){};
\node[u] (v2) at (40:1){};
\node[u] (v3) at (0:1){};
\node[u] (v4) at (320:1){};
\node[u] (v5) at (280:1){};
\node[u] (v6) at (240:1){};
\node[u] (v7) at (200:1){};
\node[u] (v8) at (160:1){};
\node[u] (v9) at (120:1){};
\node[u] (v10) at (60:1.5){};
\node[u] (v11) at (260:1.5){};
\node (C9) at (0, 0){$C_9$};

\draw   (0,0) circle[radius=1cm];
  \draw (v1) -- (v10);
  \draw (v2) -- (v10);
  \draw (v5) -- (v11);
  \draw (v6) -- (v11);


  \draw (v4) -- (320:1.4);
  \draw (v7) -- (200:1.4);
 \draw (v9) -- (120:1.4);

   \node[above=0.05cm, font=\small] at (v1) {$v_1$};
   \node[right=0.05cm,font=\small] at (v2) {$v_2$};
   \node[right=0.05cm, font=\small] at (v3) {$v_3$};
   \node[right=0.05cm,font=\small] at (v4) {$v_4$};
    \node[right=0.05cm, below=0.1cm, font=\small] at (v5) {$v_5$};
   \node[below=0.05cm,left=0.05cm, font=\small] at (v6) {$v_6$};
   \node[above=0.1cm, left=0.05cm, font=\small] at (v7) {$v_7$};
   \node[left=0.05cm, font=\small] at (v8) {$v_8$};
    \node[left=0.05cm, font=\small] at (v9) {$v_9$};
   \node[above=0.05cm,  font=\small] at (v10) {$v_{10}$};
    \node[left=0.05cm,  font=\small] at (v11) {$v_{11}$};

\node[left=0.1cm,below=0.7cm,font=\small] at (v5) {$F_4$};

\end{tikzpicture}

\end{minipage}

\vspace{0.5cm}


\begin{minipage}{0.24\textwidth}\centering
\begin{tikzpicture}[u/.style={fill=black, minimum size =3pt,ellipse,inner sep=1pt},node distance=1.5cm,scale=1.2]
\node[u] (v1) at (80:1){};
\node[u] (v2) at (40:1){};
\node[u] (v3) at (0:1){};
\node[u] (v4) at (320:1){};
\node[u] (v5) at (280:1){};
\node[u] (v6) at (240:1){};
\node[u] (v7) at (200:1){};
\node[u] (v8) at (160:1){};
\node[u] (v9) at (120:1){};
\node[u] (v10) at (60:1.5){};
\node[u] (v11) at (340:1.5){};
\node (C9) at (0, 0){$C_9$};

\draw   (0,0) circle[radius=1cm];
  \draw (v1) -- (v10);
  \draw (v2) -- (v10);
  \draw (v3) -- (v11);
  \draw (v4) -- (v11);

  \draw (v5) -- (280:1.4);
  \draw (v7) -- (200:1.4);
 \draw (v9) -- (120:1.4);

   \node[above=0.1cm, font=\small] at (v1) {$v_1$};
   \node[right=0.1cm,font=\small] at (v2) {$v_2$};
   \node[right=0.05cm, font=\small] at (v3) {$v_3$};
   \node[right=0.05cm, below=0.1cm,font=\small] at (v4) {$v_4$};
    \node[below=0.2cm,right=0.01cm, font=\small] at (v5) {$v_5$};
   \node[below=0.05cm,font=\small] at (v6) {$v_6$};
   \node[above=0.05cm, left=0.05cm, font=\small] at (v7) {$v_7$};
   \node[left=0.05cm, font=\small] at (v8) {$v_8$};
    \node[left=0.1cm,font=\small] at (v9) {$v_9$};
   \node[above=0.1cm,  font=\small] at (v10) {$v_{10}$};
    \node[above=0.1cm, right=0.05cm, font=\small] at (v11) {$v_{11}$};

\node[left=0.1cm,below=0.7cm,font=\small] at (v5) {$F_5$};

\end{tikzpicture}

\end{minipage}\hfill
%
\begin{minipage}{0.24\textwidth}\centering
\begin{tikzpicture}[u/.style={fill=black, minimum size =3pt,ellipse,inner sep=1pt},node distance=1.5cm,scale=1.2]
\node[u] (v1) at (80:1){};
\node[u] (v2) at (40:1){};
\node[u] (v3) at (0:1){};
\node[u] (v4) at (320:1){};
\node[u] (v5) at (280:1){};
\node[u] (v6) at (240:1){};
\node[u] (v7) at (200:1){};
\node[u] (v8) at (160:1){};
\node[u] (v9) at (120:1){};
\node[u] (v10) at (60:1.5){};
\node[u] (v11) at (220:1.5){};
\node (C9) at (0, 0){$C_9$};

\draw   (0,0) circle[radius=1cm];
  \draw (v1) -- (v10);
  \draw (v2) -- (v10);
  \draw (v6) -- (v11);
  \draw (v7) -- (v11);

  \draw (v4) -- (320:1.4);
  \draw (v5) -- (280:1.4);
 \draw (v9) -- (120:1.4);

   \node[above=0.05cm, font=\small] at (v1) {$v_1$};
   \node[right=0.05cm,font=\small] at (v2) {$v_2$};
   \node[right=0.05cm, font=\small] at (v3) {$v_3$};
   \node[right=0.05cm,font=\small] at (v4) {$v_4$};
    \node[below=0.1cm,right=0.01cm, font=\small] at (v5) {$v_5$};
   \node[below=0.05cm,font=\small] at (v6) {$v_6$};
   \node[above=0.05cm, left=0.05cm, font=\small] at (v7) {$v_7$};
   \node[left=0.05cm, font=\small] at (v8) {$v_8$};
    \node[left=0.05cm,font=\small] at (v9) {$v_9$};
   \node[above=0.05cm,  font=\small] at (v10) {$v_{10}$};
    \node[below=0.05cm, font=\small] at (v11) {$v_{11}$};

  \node[left=0.1cm,below=0.7cm, font=\small] at (v5) {$F_6$};

\end{tikzpicture}

\end{minipage}\hfill
%
\begin{minipage}{0.24\textwidth}\centering
\begin{tikzpicture}[u/.style={fill=black, minimum size =3pt,ellipse,inner sep=1pt},node distance=1.5cm,scale=1.2]
\node[u] (v1) at (80:1){};
\node[u] (v2) at (40:1){};
\node[u] (v3) at (0:1){};
\node[u] (v4) at (320:1){};
\node[u] (v5) at (280:1){};
\node[u] (v6) at (240:1){};
\node[u] (v7) at (200:1){};
\node[u] (v8) at (160:1){};
\node[u] (v9) at (120:1){};
\node[u] (v10) at (60:1.5){};
\node[u] (v11) at (180:1.5){};
\node (C9) at (0, 0){$C_9$};

\draw   (0,0) circle[radius=1cm];
  \draw (v1) -- (v10);
  \draw (v2) -- (v10);
  \draw (v7) -- (v11);
  \draw (v8) -- (v11);

  \draw (v4) -- (320:1.4);
  \draw (v5) -- (280:1.4);
 \draw (v9) -- (120:1.4);

   \node[above=0.05cm, font=\small] at (v1) {$v_1$};
   \node[right=0.05cm, font=\small] at (v2) {$v_2$};
   \node[right=0.05cm, font=\small] at (v3) {$v_3$};
   \node[right=0.05cm, font=\small] at (v4) {$v_4$};
    \node[below=0.1cm, right=0.01cm, font=\small] at (v5) {$v_5$};
   \node[below=0.05cm, font=\small] at (v6) {$v_6$};
   \node[below=0.3cm, left=0.01cm, font=\small] at (v7) {$v_7$};
   \node[above=0.05cm, left=0.05cm, font=\small] at (v8) {$v_8$};
    \node[left=0.05cm, font=\small] at (v9) {$v_9$};
   \node[above=0.05cm,  font=\small] at (v10) {$v_{10}$};
    \node[below=0.05cm, font=\small] at (v11) {$v_{11}$};

\node[left=0.1cm,below=0.7cm, font=\small]at (v5) {$F_7$};
\end{tikzpicture}

\end{minipage}\hfill
%
\begin{minipage}{0.24\textwidth}\centering
\begin{tikzpicture}[u/.style={fill=black, minimum size =3pt,ellipse,inner sep=1pt},node distance=1.5cm,scale=1.2]
\node[u] (v1) at (80:1){};
\node[u] (v2) at (40:1){};
\node[u] (v3) at (0:1){};
\node[u] (v4) at (320:1){};
\node[u] (v5) at (280:1){};
\node[u] (v6) at (240:1){};
\node[u] (v7) at (200:1){};
\node[u] (v8) at (160:1){};
\node[u] (v9) at (120:1){};
\node[u] (v10) at (60:1.5){};
\node[u] (v11) at (300:1.5){};
\node (C9) at (0, 0){$C_9$};

\draw   (0,0) circle[radius=1cm];
  \draw (v1) -- (v10);
  \draw (v2) -- (v10);
  \draw (v4) -- (v11);
  \draw (v5) -- (v11);


  \draw (v6) -- (240:1.4);
  \draw (v7) -- (200:1.4);
 \draw (v9) -- (120:1.4);

   \node[above=0.05cm, font=\small] at (v1) {$v_1$};
   \node[right=0.05cm,font=\small] at (v2) {$v_2$};
   \node[right=0.05cm, font=\small] at (v3) {$v_3$};
   \node[right=0.05cm, font=\small] at (v4) {$v_4$};
    \node[below=0.05cm,  font=\small] at (v5) {$v_5$};
   \node[left=0.05cm,font=\small] at (v6) {$v_6$};
   \node[above=0.2cm, left=0.05cm, font=\small] at (v7) {$v_7$};
   \node[left=0.05cm, font=\small] at (v8) {$v_8$};
    \node[left=0.05cm,font=\small] at (v9) {$v_9$};
   \node[above=0.05cm,  font=\small] at (v10) {$v_{10}$};
    \node[below=0.05cm, font=\small] at (v11) {$v_{11}$};

\node[left=0.1cm,below=0.7cm, font=\small] at (v5) {$F_8$};

\end{tikzpicture}

\end{minipage}

\vspace{0.5cm}

\begin{minipage}{0.24\textwidth}\centering
\begin{tikzpicture}[u/.style={fill=black, minimum size =3pt,ellipse,inner sep=1pt},node distance=1.5cm,scale=1.2]
\node[u] (v1) at (80:1){};
\node[u] (v2) at (40:1){};
\node[u] (v3) at (0:1){};
\node[u] (v4) at (320:1){};
\node[u] (v5) at (280:1){};
\node[u] (v6) at (240:1){};
\node[u] (v7) at (200:1){};
\node[u] (v8) at (160:1){};
\node[u] (v9) at (120:1){};
\node[u] (v10) at (60:1.5){};
\node[u] (v11) at (300:1.5){};
\node[u] (v12) at (180:1.5){};
\node (C9) at (0, 0){$C_9$};

\draw   (0,0) circle[radius=1cm];
  \draw (v1) -- (v10);
  \draw (v2) -- (v10);
  \draw (v4) -- (v11);
  \draw (v5) -- (v11);

  \draw (v7) -- (v12);
  \draw (v8) -- (v12);

  \draw (v6) -- (240:1.4);
 \draw (v9) -- (120:1.4);

   \node[above=0.05cm, font=\small] at (v1) {$v_1$};
   \node[right=0.05cm,font=\small] at (v2) {$v_2$};
   \node[right=0.05cm, font=\small] at (v3) {$v_3$};
   \node[right=0.05cm,font=\small] at (v4) {$v_4$};
    \node[below=0.05cm,  font=\small] at (v5) {$v_5$};
   \node[left=0.05cm,font=\small] at (v6) {$v_6$};
   \node[below=0.3cm, left=0.05cm, font=\small] at (v7) {$v_7$};
   \node[left=0.05cm, font=\small] at (v8) {$v_8$};
    \node[left=0.05cm,font=\small] at (v9) {$v_9$};
   \node[above=0.05cm,  font=\small] at (v10) {$v_{10}$};
    \node[below=0.05cm, font=\small] at (v11) {$v_{11}$};
    \node[below=0.05cm,  font=\small] at (v12) {$v_{12}$};

\node[left=0.1cm,below=0.7cm, font=\small] at (v5) { $F_9$};
\end{tikzpicture}

\end{minipage}\hfill
%
\begin{minipage}{0.24\textwidth}\centering
\begin{tikzpicture}[u/.style={fill=black, minimum size =3pt,ellipse,inner sep=1pt},node distance=1.5cm,scale=1.2]
\node[u] (v1) at (80:1){};
\node[u] (v2) at (40:1){};
\node[u] (v3) at (0:1){};
\node[u] (v4) at (320:1){};
\node[u] (v5) at (280:1){};
\node[u] (v6) at (240:1){};
\node[u] (v7) at (200:1){};
\node[u] (v8) at (160:1){};
\node[u] (v9) at (120:1){};
\node[u] (v10) at (60:1.5){};
\node[u] (v11) at (340:1.5){};
\node[u] (v12) at (220:1.5){};
\node (C9) at (0, 0){$C_9$};

\draw   (0,0) circle[radius=1cm];
  \draw (v1) -- (v10);
  \draw (v2) -- (v10);
  \draw (v3) -- (v11);
  \draw (v4) -- (v11);
  \draw (v6) -- (v12);
  \draw (v7) -- (v12);

  \draw (v5) -- (280:1.4);
 \draw (v9) -- (120:1.4);

   \node[above=0.05cm, font=\small] at (v1) {$v_1$};
   \node[right=0.05cm,font=\small] at (v2) {$v_2$};
   \node[right=0.05cm, font=\small] at (v3) {$v_3$};
   \node[right=0.1cm, below=0.05cm, font=\small] at (v4) {$v_4$};
    \node[below=0.2cm,right=0.01cm, font=\small] at (v5) {$v_5$};
   \node[below=0.05cm,font=\small] at (v6) {$v_6$};
   \node[above=0.05cm, left=0.05cm, font=\small] at (v7) {$v_7$};
   \node[left=0.05cm, font=\small] at (v8) {$v_8$};
    \node[left=0.05cm,font=\small] at (v9) {$v_9$};
   \node[above=0.05cm,  font=\small] at (v10) {$v_{10}$};
    \node[below=0.05cm,  font=\small] at (v11) {$v_{11}$};
     \node[above=0.05cm, left=0.05cm, font=\small] at (v12) {$v_{12}$};
  \node[left=0.1cm,below=0.7cm,font=\small] at (v5) { $F_{10}$};
\end{tikzpicture}

\end{minipage}\hfill
%
\begin{minipage}{0.24\textwidth}\centering
\begin{tikzpicture}[u/.style={fill=black, minimum size =3pt,ellipse,inner sep=1pt},node distance=1.5cm,scale=1.2]
\node[u] (v1) at (80:1){};
\node[u] (v2) at (40:1){};
\node[u] (v3) at (0:1){};
\node[u] (v4) at (320:1){};
\node[u] (v5) at (280:1){};
\node[u] (v6) at (240:1){};
\node[u] (v7) at (200:1){};
\node[u] (v8) at (160:1){};
\node[u] (v9) at (120:1){};
\node[u] (v10) at (60:1.5){};
\node[u] (v11) at (340:1.5){};
\node[u] (v12) at (260:1.5){};
\node (C9) at (0, 0){$C_9$};

\draw   (0,0) circle[radius=1cm];
  \draw (v1) -- (v10);
  \draw (v2) -- (v10);
  \draw (v3) -- (v11);
  \draw (v4) -- (v11);

  \draw (v5) -- (v12);
  \draw (v6) -- (v12);

  \draw (v7) -- (200:1.4);
 \draw (v9) -- (120:1.4);

   \node[above=0.05cm, font=\small] at (v1) {$v_1$};
   \node[right=0.05cm,font=\small] at (v2) {$v_2$};
   \node[right=0.05cm, font=\small] at (v3) {$v_3$};
   \node[right=0.05cm, below=0.1cm, font=\small] at (v4) {$v_4$};
    \node[right=0.05cm, below=0.1cm, font=\small] at (v5) {$v_5$};
   \node[below=0.05cm, left=0.05cm, font=\small] at (v6) {$v_6$};
   \node[above=0.05cm, left=0.05cm, font=\small] at (v7) {$v_7$};
   \node[left=0.05cm, font=\small] at (v8) {$v_8$};
    \node[left=0.05cm,font=\small] at (v9) {$v_9$};
   \node[above=0.05cm,  font=\small] at (v10) {$v_{10}$};
    \node[below=0.05cm, font=\small] at (v11) {$v_{11}$};
    \node[left=0.05cm,  font=\small] at (v12) {$v_{12}$};

\node[left=0.1cm,below=0.7cm, font=\small] at (v5) {$F_{11}$};
\end{tikzpicture}

\end{minipage}\hfill
%
\begin{minipage}{0.24\textwidth}\centering
\begin{tikzpicture}[u/.style={fill=black, minimum size =3pt,ellipse,inner sep=1pt},node distance=1.5cm,scale=1.2]
\node[u] (v1) at (80:1){};
\node[u] (v2) at (40:1){};
\node[u] (v3) at (0:1){};
\node[u] (v4) at (320:1){};
\node[u] (v5) at (280:1){};
\node[u] (v6) at (240:1){};
\node[u] (v7) at (200:1){};
\node[u] (v8) at (160:1){};
\node[u] (v9) at (120:1){};
\node[u] (v10) at (60:1.5){};
\node[u] (v11) at (340:1.5){};
\node[u] (v12) at (260:1.5){};
\node[u] (v13) at (180:1.5){};
\node (C9) at (0, 0){$C_9$};

\draw   (0,0) circle[radius=1cm];
  \draw (v1) -- (v10);
  \draw (v2) -- (v10);
  \draw (v3) -- (v11);
  \draw (v4) -- (v11);

  \draw (v5) -- (v12);
  \draw (v6) -- (v12);
  \draw (v7) -- (v13);
  \draw (v8) -- (v13);

 \draw (v9) -- (120:1.4);

   \node[above=0.05cm, font=\small] at (v1) {$v_1$};
   \node[right=0.05cm, font=\small] at (v2) {$v_2$};
   \node[right=0.05cm, font=\small] at (v3) {$v_3$};
   \node[below=0.05cm,font=\small] at (v4) {$v_4$};
    \node[below=0.05cm,  font=\small] at (v5) {$v_5$};
   \node[left=0.05cm, font=\small] at (v6) {$v_6$};
   \node[left=0.1cm, below=0.05cm, font=\small] at (v7) {$v_7$};
   \node[left=0.05cm, font=\small] at (v8) {$v_8$};
    \node[left=0.05cm, font=\small] at (v9) {$v_9$};
   \node[above=0.05cm,  font=\small] at (v10) {$v_{10}$};
    \node[above=0.05cm, right=0.05cm, font=\small] at (v11) {$v_{11}$};
    \node[left=0.05cm,  font=\small] at (v12) {$v_{12}$};
     \node[below=0.05cm, font=\small] at (v13) {$v_{13}$};

\node[left=0.1cm,below=0.7cm, font=\small] at (v5) {$F_{12}$};
\end{tikzpicture}
\end{minipage}
\caption{The twelve reducible configurations on $9$-cycles.}
\label{W12-subgraph}
\end{figure}
\subsection{Proof of Claim~\ref{claim-F-one}: Reducible configurations on $9$-cycles}

We  prove Claim~\ref{claim-F-one} by showing that none of the configurations $F_i$s can occur in $G$ where $i \in \{1,2,\dots, 12\}$.

\noindent\textbf{\textit{Proof of Claim~\ref{claim-F-one}.}}
For each $i\in\{2,3,5,9,10,12\}$, Claim~24 of~\cite{KLuo25}
already established, via the graph polynomial method, that $F_i^2$ is
Alon--Tarsi $f$-choosable. Therefore, none of these configurations can occur
in $G$. It remains to consider $i\in\{1,4,6,7,8,11\}$.

Suppose, to the contrary, that $G$ contains one of the configurations $F_i$, where $i\in\{1,4,6,7,8,11\}$.

We treat each configuration $F_i$, where
$i\in\{1,\ldots,12\}$, in the same way.
Assume that $G$ contains $F_i$ as a subgraph, and define
\[
f(v)=6-t_v,
\qquad
t_v=\bigl|N_{G^2}(v)\setminus V(F_i)\bigr|
\]
for every $v\in V(F_i)$.
If $F_i^2$ is Alon--Tarsi $f$-choosable, then we obtain a contradiction to Claim~\ref{key-remark}. 
Hence $F_i$ cannot occur in $G$.

Since the arguments are identical, for each remaining configuration we record
only the function $f$, the graph polynomial $P_{F_i^2}(\bm{x})$, and a monomial
with nonzero coefficient.

\medskip
\begin{enumerate}[(a)]

\item \textbf{Case $F_i=F_1$.}

The function $f$ is given by (see Figure~\ref{F1-color})
\[
f(v_i)=
\begin{cases}
2, & i\in\{7,8\},\\
3, & i\in\{1,3,5\},\\
4, & i\in\{2,4,6,9\}.
\end{cases}
\]

The graph polynomial of $F_1^2$ is
\begin{eqnarray*}
P_{F_1^2}(\bm{x})
&=& \prod_{u\sim v,\;u<v}(x_u-x_v)\\
&=& (x_1-x_2)(x_1-x_3)(x_1-x_8)(x_1-x_9)
(x_2-x_3)(x_2-x_4)(x_2-x_9)\\
&& (x_3-x_4)(x_3-x_5)(x_4-x_5)(x_4-x_6)
(x_5-x_6)(x_5-x_7)\\
&& (x_6-x_7)(x_6-x_8)(x_7-x_8)(x_7-x_9)(x_8-x_9).
\end{eqnarray*}
A direct computation in \textsc{Mathematica} shows that the coefficient of
\[
x_1^2x_2^2x_3x_4^3x_5^2x_6^3x_7x_8x_9^3
\]
is $1$.

\begin{figure}
\centering


\begin{minipage}{0.24\textwidth}\centering
\raisebox{-34ex}{\begin{tikzpicture}[u/.style={fill=black, minimum size =3pt,ellipse,inner sep=1pt},node distance=1.5cm,scale=1.5] 
\node[u] (v1) at (80:1){};
\node[u] (v2) at (40:1){};
\node[u] (v3) at (0:1){};
\node[u] (v4) at (320:1){};
\node[u] (v5) at (280:1){};
\node[u] (v6) at (240:1){};
\node[u] (v7) at (200:1){};
\node[u] (v8) at (160:1){};
\node[u] (v9) at (120:1){};
\node (C9) at (0, 0){$C_9$};

\draw   (0,0) circle[radius=1cm];

\node[u] (w1) at (80:1.4){};
\node[u] (w3) at (0:1.4){};
\node[u] (w5) at (280:1.4){};
\node[u] (w7) at (200:1.4){};
\node[u] (w8) at (160:1.4){};
  \draw (v1) -- (w1);
  \draw (v3) -- (w3);
  \draw (v5) -- (w5);
  \draw (v7) -- (w7);
  \draw (v8) -- (w8);

   \node[above=0.2cm, left=0.05cm, font=\small] at (v1) {$v_1$};
   \node[above=0.1cm,font=\small] at (v2) {$v_2$};
   \node[right=0.2cm,above= 0.1cm, font=\small] at (v3) {$v_3$};
   \node[below=0.1cm,font=\small] at (v4) {$v_4$};
    \node[below=0.2cm,left=0.01cm, font=\small] at (v5) {$v_5$};
   \node[below=0.1cm,font=\small] at (v6) {$v_6$};
   \node[above=0.05cm, left=0.1cm, font=\small] at (v7) {$v_7$};
   \node[below=0.1cm, left=0.1cm, font=\small] at (v8) {$v_8$};
    \node[above=0.1cm,font=\small] at (v9) {$v_9$};

 \node[below=0.1cm,  font=\scriptsize] at (v1) {\textcolor{blue}{3}};
\node[below=0.1cm, left=0.1cm, font=\scriptsize] at (v2) {\textcolor{blue}{4}};
\node[left=0.1cm,font=\scriptsize] at (v3) {\textcolor{blue}{3}};
\node[above=0.1cm, left=0.1cm,font=\scriptsize] at (v4) {\textcolor{blue}{4}};
\node[above=0.05cm,font=\scriptsize] at (v5) {\textcolor{blue}{3}};
\node[above=0.1cm, font=\scriptsize] at (v6) {\textcolor{blue}{4}};
\node[right=0.1cm, font=\scriptsize] at (v7) {\textcolor{blue}{2}};
\node[right=0.1cm, font=\scriptsize] at (v8) {\textcolor{blue}{2}};
\node[right=0.1cm, below=0.1cm, font=\scriptsize] at (v9) {\textcolor{blue}{4}};
 \node[left=0.1cm,below=1cm,font=\small] at (v5) {(a) $F_1$};

\end{tikzpicture}}

\end{minipage}\hfill
%
\begin{minipage}{0.24\textwidth}\centering
\begin{tikzpicture}[u/.style={fill=black, minimum size =3pt,ellipse,inner sep=1pt},node distance=1.5cm,scale=1.5]
\node[u] (v1) at (80:1){};
\node[u] (v2) at (40:1){};
\node[u] (v3) at (0:1){};
\node[u] (v4) at (320:1){};
\node[u] (v5) at (280:1){};
\node[u] (v6) at (240:1){};
\node[u] (v7) at (200:1){};
\node[u] (v8) at (160:1){};
\node[u] (v9) at (120:1){};
\node[u] (v10) at (60:1.5){};
\node[u] (v11) at (260:1.5){};
\node (C9) at (0, 0){$C_9$};

\draw   (0,0) circle[radius=1cm];
  \draw (v1) -- (v10);
  \draw (v2) -- (v10);
  \draw (v5) -- (v11);
  \draw (v6) -- (v11);

\node[u] (w4) at (320:1.4){};
\node[u] (w7) at (200:1.4){};
\node[u] (w9) at (120:1.4){};
  \draw (v4) -- (w4);
  \draw (v7) -- (w7);
 \draw (v9) -- (w9);

   \node[above=0.05cm, font=\small] at (v1) {$v_1$};
   \node[right=0.05cm,font=\small] at (v2) {$v_2$};
   \node[right=0.05cm, font=\small] at (v3) {$v_3$};
   \node[right=0.1cm,font=\small] at (v4) {$v_4$};
    \node[right=0.05cm, below=0.1cm, font=\small] at (v5) {$v_5$};
   \node[below=0.1cm,left=0.05cm, font=\small] at (v6) {$v_6$};
   \node[above=0.1cm, left=0.05cm, font=\small] at (v7) {$v_7$};
   \node[left=0.05cm, font=\small] at (v8) {$v_8$};
    \node[left=0.1cm,font=\small] at (v9) {$v_9$};
   \node[above=0.1cm,  font=\small] at (v10) {$v_{10}$};
    \node[left=0.1cm,  font=\small] at (v11) {$v_{11}$};

 \node[below=0.05cm,  font=\scriptsize] at (v1) {\textcolor{blue}{4}};
\node[below=0.05cm, left=0.1cm, font=\scriptsize] at (v2) {\textcolor{blue}{5}};
\node[left=0.05cm,font=\scriptsize] at (v3) {\textcolor{blue}{5}};
\node[above=0.05cm, left=0.1cm,font=\scriptsize] at (v4) {\textcolor{blue}{3}};
\node[above=0.05cm,font=\scriptsize] at (v5) {\textcolor{blue}{4}};
\node[above=0.05cm, font=\scriptsize] at (v6) {\textcolor{blue}{4}};
\node[right=0.05cm, font=\scriptsize] at (v7) {\textcolor{blue}{3}};
\node[right=0.05cm, font=\scriptsize] at (v8) {\textcolor{blue}{4}};
\node[below=0.05cm, font=\scriptsize] at (v9) {\textcolor{blue}{3}};
\node[right=0.1cm, font=\scriptsize] at (v10) {\textcolor{blue}{3}};
\node[right=0.05cm, font=\scriptsize] at (v11) {\textcolor{blue}{3}};

\node[left=0.1cm,below=1.0cm,font=\small] at (v5) {(b) $F_4$};

\end{tikzpicture}

\end{minipage}\hfill
%
\begin{minipage}{0.24\textwidth}\centering
\begin{tikzpicture}[u/.style={fill=black, minimum size =3pt,ellipse,inner sep=1pt},node distance=1.5cm,scale=1.5]
\node[u] (v1) at (80:1){};
\node[u] (v2) at (40:1){};
\node[u] (v3) at (0:1){};
\node[u] (v4) at (320:1){};
\node[u] (v5) at (280:1){};
\node[u] (v6) at (240:1){};
\node[u] (v7) at (200:1){};
\node[u] (v8) at (160:1){};
\node[u] (v9) at (120:1){};
\node[u] (v10) at (60:1.5){};
\node[u] (v11) at (220:1.5){};
\node (C9) at (0, 0){$C_9$};

\draw   (0,0) circle[radius=1cm];
  \draw (v1) -- (v10);
  \draw (v2) -- (v10);
  \draw (v6) -- (v11);
  \draw (v7) -- (v11);

\node[u] (w4) at (320:1.4){};
\node[u] (w5) at (280:1.4){};
\node[u] (w9) at (120:1.4){};
  \draw (v4) -- (w4);
  \draw (v5) -- (w5);
 \draw (v9) -- (w9);

   \node[above=0.1cm, font=\small] at (v1) {$v_1$};
   \node[right=0.1cm,font=\small] at (v2) {$v_2$};
   \node[right=0.1cm, font=\small] at (v3) {$v_3$};
   \node[right=0.1cm,font=\small] at (v4) {$v_4$};
    \node[below=0.2cm,right=0.01cm, font=\small] at (v5) {$v_5$};
   \node[below=0.1cm,font=\small] at (v6) {$v_6$};
   \node[above=0.05cm, left=0.05cm, font=\small] at (v7) {$v_7$};
   \node[left=0.1cm, font=\small] at (v8) {$v_8$};
    \node[left=0.1cm,font=\small] at (v9) {$v_9$};
   \node[above=0.1cm,  font=\small] at (v10) {$v_{10}$};
    \node[below=0.1cm, font=\small] at (v11) {$v_{11}$};

 \node[below=0.05cm,  font=\scriptsize] at (v1) {\textcolor{blue}{4}};
\node[below=0.05cm, left=0.1cm, font=\scriptsize] at (v2) {\textcolor{blue}{5}};
\node[left=0.05cm,font=\scriptsize] at (v3) {\textcolor{blue}{5}};
\node[above=0.05cm, left=0.1cm,font=\scriptsize] at (v4) {\textcolor{blue}{2}};
\node[above=0.05cm,font=\scriptsize] at (v5) {\textcolor{blue}{2}};
\node[above=0.05cm, font=\scriptsize] at (v6) {\textcolor{blue}{4}};
\node[right=0.05cm, font=\scriptsize] at (v7) {\textcolor{blue}{5}};
\node[right=0.05cm, font=\scriptsize] at (v8) {\textcolor{blue}{5}};
\node[below=0.05cm, font=\scriptsize] at (v9) {\textcolor{blue}{3}};
\node[ below=0.1cm, right=0.1cm, font=\scriptsize] at (v10) {\textcolor{blue}{3}};
\node[left=0.1cm, font=\scriptsize] at (v11) {\textcolor{blue}{3}};

\node[left=0.1cm,below=1.0cm,font=\small] at (v5) {(c) $F_6$};

\end{tikzpicture}

\end{minipage}\hfill
\caption{The numbers indicate the values of $f(v)$ at the corresponding vertices.}
\label{F1-color}
\end{figure}


\item \textbf{Case $F_i=F_4$.}

The function $f$ is given by (see Figure~\ref{F1-color})
\[
f(v_i)=
\begin{cases}
3, & i\in\{4,7,9,10,11\},\\
4, & i\in\{1,5,6,8\},\\
5, & i\in\{2,3\}.
\end{cases}
\]

The graph polynomial of $F_4^2$ is
\begin{eqnarray*}
P_{F_4^2}(\bm{x})
&=& \prod_{u\sim v,\;u<v}(x_u-x_v)\\
&=& (x_1-x_2)(x_1-x_3)(x_1-x_8)(x_1-x_9)(x_1-x_{10})
(x_2-x_3)(x_2-x_4)\\
&& (x_2-x_9)(x_2-x_{10})(x_3-x_4)(x_3-x_5)(x_3-x_{10})
(x_4-x_5)\\
&& (x_4-x_6)(x_4-x_{11})(x_5-x_6)(x_5-x_7)(x_5-x_{11})
(x_6-x_7)\\
&& (x_6-x_8)(x_6-x_{11})(x_7-x_8)(x_7-x_9)(x_7-x_{11})
(x_8-x_9)(x_9-x_{10}).
\end{eqnarray*}
A direct computation in \textsc{Mathematica} shows that the coefficient of
\[
x_1x_2^4x_3^3x_4^2x_5^3x_6^2x_7^2x_8^3x_9^2x_{10}^2x_{11}^2
\]
is $-1$.

\item \textbf{Case $F_i=F_6$.}

The function $f$ is given by (see Figure~\ref{F1-color})
\[
f(v_i)=
\begin{cases}
2, & i\in\{4,5\},\\
3, & i\in\{9,10,11\},\\
4, & i\in\{1,6\},\\
5, & i\in\{2,3,7,8\}.
\end{cases}
\]

The graph polynomial of $F_6^2$ is
\begin{eqnarray*}
P_{F_6^2}(\bm{x})
&=& \prod_{u\sim v,\;u<v}(x_u-x_v)\\
&=& (x_1-x_2)(x_1-x_3)(x_1-x_8)(x_1-x_9)(x_1-x_{10})
(x_2-x_3)(x_2-x_4)\\
&& (x_2-x_9)(x_2-x_{10})(x_3-x_4)(x_3-x_5)(x_3-x_{10})
(x_4-x_5)\\
&& (x_4-x_6)(x_5-x_6)(x_5-x_7)(x_5-x_{11})(x_6-x_7)
(x_6-x_8)\\
&& (x_6-x_{11})(x_7-x_8)(x_7-x_9)(x_7-x_{11})(x_8-x_9)
(x_8-x_{11})(x_9-x_{10}).
\end{eqnarray*}
A direct computation in \textsc{Mathematica} shows that the coefficient of
\[
x_1^2x_2^3x_3^3x_4x_5x_6^2x_7^4x_8^4x_9^2x_{10}^2x_{11}^2
\]
is $-1$.

\item \textbf{Case $F_i=F_7$.}

The function $f$ is given by (see Figure~\ref{F7-color})
\[
f(v_i)=
\begin{cases}
2, & i\in\{4,5\},\\
3, & i\in\{9,10,11\},\\
4, & i\in\{1,8\},\\
5, & i\in\{2,3,6,7\}.
\end{cases}
\]

The graph polynomial of $F_7^2$ is
\begin{eqnarray*}
P_{F_7^2}(\bm{x})
&=& \prod_{u\sim v,\;u<v}(x_u-x_v)\\
&=& (x_1-x_2)(x_1-x_3)(x_1-x_8)(x_1-x_9)(x_1-x_{10})
(x_2-x_3)(x_2-x_4)\\
&& (x_2-x_9)(x_2-x_{10})(x_3-x_4)(x_3-x_5)(x_3-x_{10})
(x_4-x_5)\\
&& (x_4-x_6)(x_5-x_6)(x_5-x_7)(x_6-x_7)(x_6-x_8)
(x_6-x_{11})\\
&& (x_7-x_8)(x_7-x_9)(x_7-x_{11})(x_8-x_9)(x_8-x_{11})
(x_9-x_{10})(x_9-x_{11}).
\end{eqnarray*}
A direct computation in \textsc{Mathematica} shows that the coefficient of
\[
x_1x_2^3x_3^4x_4x_5x_6^4x_7^4x_8^2x_9^2x_{10}^2x_{11}^2
\]
is $-1$.

\begin{figure*}[htbp]
\begin{multicols}{3}
\begin{center}
\begin{tikzpicture}
[u/.style={fill=black, minimum size =3pt,ellipse,inner sep=1pt},node distance=1.5cm,scale=1.5]
\node[u] (v1) at (80:1){};
\node[u] (v2) at (40:1){};
\node[u] (v3) at (0:1){};
\node[u] (v4) at (320:1){};
\node[u] (v5) at (280:1){};
\node[u] (v6) at (240:1){};
\node[u] (v7) at (200:1){};
\node[u] (v8) at (160:1){};
\node[u] (v9) at (120:1){};
\node[u] (v10) at (60:1.5){};
\node[u] (v11) at (180:1.5){};
\node (C9) at (0, 0){$C_9$};

\draw   (0,0) circle[radius=1cm];
  \draw (v1) -- (v10);
  \draw (v2) -- (v10);
  \draw (v7) -- (v11);
  \draw (v8) -- (v11);

\node[u] (w4) at (320:1.4){};
\node[u] (w5) at (280:1.4){};
\node[u] (w9) at (120:1.4){};
  \draw (v4) -- (w4);
  \draw (v5) -- (w5);
 \draw (v9) -- (w9);

   \node[above=0.1cm, font=\small] at (v1) {$v_1$};
   \node[right=0.1cm,font=\small] at (v2) {$v_2$};
   \node[right=0.1cm, font=\small] at (v3) {$v_3$};
   \node[right=0.1cm,font=\small] at (v4) {$v_4$};
    \node[below=0.2cm,right=0.01cm, font=\small] at (v5) {$v_5$};
   \node[below=0.1cm,font=\small] at (v6) {$v_6$};
   \node[below=0.1cm, left=0.05cm, font=\small] at (v7) {$v_7$};
   \node[above=0.05cm, left=0.05cm, font=\small] at (v8) {$v_8$};
    \node[left=0.1cm,font=\small] at (v9) {$v_9$};
   \node[above=0.1cm,  font=\small] at (v10) {$v_{10}$};
    \node[below=0.1cm, font=\small] at (v11) {$v_{11}$};

 \node[below=0.05cm,  font=\scriptsize] at (v1) {\textcolor{blue}{4}};
\node[below=0.05cm, left=0.1cm, font=\scriptsize] at (v2) {\textcolor{blue}{5}};
\node[left=0.05cm,font=\scriptsize] at (v3) {\textcolor{blue}{5}};
\node[above=0.05cm, left=0.1cm,font=\scriptsize] at (v4) {\textcolor{blue}{2}};
\node[above=0.05cm,font=\scriptsize] at (v5) {\textcolor{blue}{2}};
\node[above=0.05cm, font=\scriptsize] at (v6) {\textcolor{blue}{5}};
\node[right=0.05cm, font=\scriptsize] at (v7) {\textcolor{blue}{5}};
\node[right=0.05cm, font=\scriptsize] at (v8) {\textcolor{blue}{4}};
\node[below=0.05cm, font=\scriptsize] at (v9) {\textcolor{blue}{3}};
\node[ below=0.1cm, right=0.1cm, font=\scriptsize] at (v10) {\textcolor{blue}{3}};
\node[left=0.1cm, font=\scriptsize] at (v11) {\textcolor{blue}{3}};
 \end{tikzpicture}
      \vfill {\small (a) $F_7$   }
\end{center}
\par
\begin{center}
\begin{tikzpicture}
[u/.style={fill=black, minimum size =3pt,ellipse,inner sep=1pt},node distance=1.5cm,scale=1.5]
\node[u] (v1) at (80:1){};
\node[u] (v2) at (40:1){};
\node[u] (v3) at (0:1){};
\node[u] (v4) at (320:1){};
\node[u] (v5) at (280:1){};
\node[u] (v6) at (240:1){};
\node[u] (v7) at (200:1){};
\node[u] (v8) at (160:1){};
\node[u] (v9) at (120:1){};
\node[u] (v10) at (60:1.5){};
\node[u] (v11) at (300:1.5){};
\node (C9) at (0, 0){$C_9$};

\draw   (0,0) circle[radius=1cm];
  \draw (v1) -- (v10);
  \draw (v2) -- (v10);
  \draw (v4) -- (v11);
  \draw (v5) -- (v11);

\node[u] (w6) at (240:1.4){};
\node[u] (w7) at (200:1.4){};
\node[u] (w9) at (120:1.4){};
  \draw (v6) -- (w6);
  \draw (v7) -- (w7);
 \draw (v9) -- (w9);

   \node[above=0.05cm, font=\small] at (v1) {$v_1$};
   \node[right=0.1cm,font=\small] at (v2) {$v_2$};
   \node[right=0.05cm, font=\small] at (v3) {$v_3$};
   \node[right=0.1cm,font=\small] at (v4) {$v_4$};
    \node[below=0.1cm,  font=\small] at (v5) {$v_5$};
   \node[left=0.1cm,font=\small] at (v6) {$v_6$};
   \node[above=0.2cm, left=0.05cm, font=\small] at (v7) {$v_7$};
   \node[left=0.05cm, font=\small] at (v8) {$v_8$};
    \node[left=0.05cm,font=\small] at (v9) {$v_9$};
   \node[above=0.1cm,  font=\small] at (v10) {$v_{10}$};
    \node[below=0.1cm, font=\small] at (v11) {$v_{11}$};

 \node[below=0.05cm,  font=\scriptsize] at (v1) {\textcolor{blue}{4}};
\node[below=0.05cm, left=0.1cm, font=\scriptsize] at (v2) {\textcolor{blue}{5}};
\node[left=0.05cm,font=\scriptsize] at (v3) {\textcolor{blue}{6}};
\node[above=0.05cm, left=0.1cm,font=\scriptsize] at (v4) {\textcolor{blue}{5}};
\node[above=0.05cm,font=\scriptsize] at (v5) {\textcolor{blue}{4}};
\node[above=0.05cm, font=\scriptsize] at (v6) {\textcolor{blue}{2}};
\node[right=0.05cm, font=\scriptsize] at (v7) {\textcolor{blue}{2}};
\node[right=0.05cm, font=\scriptsize] at (v8) {\textcolor{blue}{4}};
\node[below=0.05cm, font=\scriptsize] at (v9) {\textcolor{blue}{3}};
\node[ below=0.05cm, right=0.1cm, font=\scriptsize] at (v10) {\textcolor{blue}{3}};
\node[right=0.05cm, font=\scriptsize] at (v11) {\textcolor{blue}{3}};
 \end{tikzpicture}
      \vfill {\small (b) $F_8$}

\end{center}
\par
\begin{center}
\begin{tikzpicture}
[u/.style={fill=black, minimum size =3pt,ellipse,inner sep=1pt},node distance=1.5cm,scale=1.5]
\node[u] (v1) at (80:1){};
\node[u] (v2) at (40:1){};
\node[u] (v3) at (0:1){};
\node[u] (v4) at (320:1){};
\node[u] (v5) at (280:1){};
\node[u] (v6) at (240:1){};
\node[u] (v7) at (200:1){};
\node[u] (v8) at (160:1){};
\node[u] (v9) at (120:1){};
\node[u] (v10) at (60:1.5){};
\node[u] (v11) at (340:1.5){};
\node[u] (v12) at (260:1.5){};
\node (C9) at (0, 0){$C_9$};

\draw   (0,0) circle[radius=1cm];
  \draw (v1) -- (v10);
  \draw (v2) -- (v10);
  \draw (v3) -- (v11);
  \draw (v4) -- (v11);

  \draw (v5) -- (v12);
  \draw (v6) -- (v12);

\node[u] (w7) at (200:1.4){};
\node[u] (w9) at (120:1.4){};
  \draw (v7) -- (w7);
 \draw (v9) -- (w9);

   \node[above=0.05cm, font=\small] at (v1) {$v_1$};
   \node[right=0.05cm,font=\small] at (v2) {$v_2$};
   \node[right=0.05cm, font=\small] at (v3) {$v_3$};
   \node[right=0.05cm, below=0.1cm, font=\small] at (v4) {$v_4$};
    \node[right=0.05cm, below=0.1cm, font=\small] at (v5) {$v_5$};
   \node[below=0.1cm,left=0.05cm, font=\small] at (v6) {$v_6$};
   \node[above=0.1cm, left=0.05cm, font=\small] at (v7) {$v_7$};
   \node[left=0.05cm, font=\small] at (v8) {$v_8$};
    \node[left=0.1cm,font=\small] at (v9) {$v_9$};
   \node[above=0.1cm,  font=\small] at (v10) {$v_{10}$};
    \node[below=0.05cm, font=\small] at (v11) {$v_{11}$};
    \node[below=0.1cm,  font=\small] at (v12) {$v_{12}$};

 \node[below=0.05cm,  font=\scriptsize] at (v1) {\textcolor{blue}{4}};
\node[below=0.05cm, left=0.1cm, font=\scriptsize] at (v2) {\textcolor{blue}{5}};
\node[left=0.05cm,font=\scriptsize] at (v3) {\textcolor{blue}{5}};
\node[above=0.05cm, left=0.1cm,font=\scriptsize] at (v4) {\textcolor{blue}{5}};
\node[above=0.05cm,font=\scriptsize] at (v5) {\textcolor{blue}{5}};
\node[above=0.05cm, font=\scriptsize] at (v6) {\textcolor{blue}{4}};
\node[right=0.05cm, font=\scriptsize] at (v7) {\textcolor{blue}{3}};
\node[right=0.05cm, font=\scriptsize] at (v8) {\textcolor{blue}{4}};
\node[below=0.05cm, font=\scriptsize] at (v9) {\textcolor{blue}{3}};
\node[ below=0.05cm, right=0.1cm, font=\scriptsize] at (v10) {\textcolor{blue}{3}};
\node[right=0.05cm, font=\scriptsize] at (v11) {\textcolor{blue}{3}};
\node[right=0.05cm, font=\scriptsize] at (v12) {\textcolor{blue}{3}};
 \end{tikzpicture}
        \vfill {\small (c) $F_{11}$}
\end{center}
\end{multicols} 
\caption{The numbers indicate the values of $f(v)$ at the corresponding vertices.}
\label{F7-color}
\end{figure*}

\item \textbf{Case $F_i=F_8$.}

The function $f$ is given by (see Figure~\ref{F7-color})
\[
f(v_i)=
\begin{cases}
2, & i\in\{6,7\},\\
3, & i\in\{9,10,11\},\\
4, & i\in\{1,5,8\},\\
5, & i\in\{2,4\},\\
6, & i=3.
\end{cases}
\]

The graph polynomial of $F_8^2$ is
\begin{eqnarray*}
P_{F_8^2}(\bm{x})
&=& \prod_{u\sim v,\;u<v}(x_u-x_v)\\
&=& (x_1-x_2)(x_1-x_3)(x_1-x_8)(x_1-x_9)(x_1-x_{10})
(x_2-x_3)(x_2-x_4)\\
&& (x_2-x_9)(x_2-x_{10})(x_3-x_4)(x_3-x_5)(x_3-x_{10})
(x_3-x_{11})\\
&& (x_4-x_5)(x_4-x_6)(x_4-x_{11})(x_5-x_6)(x_5-x_7)
(x_5-x_{11})\\
&& (x_6-x_7)(x_6-x_8)(x_6-x_{11})(x_7-x_8)(x_7-x_9)
(x_8-x_9)(x_9-x_{10}).
\end{eqnarray*}
A direct computation in \textsc{Mathematica} shows that the coefficient of
\[
x_1^2x_2^4x_3^5x_4^3x_5^2x_6x_7x_8^2x_9^2x_{10}^2x_{11}^2
\]
is $2$.

\item \textbf{Case $F_i=F_{11}$.}

The function $f$ is given by (see Figure~\ref{F7-color})
\[
f(v_i)=
\begin{cases}
3, & i\in\{7,9,10,11,12\},\\
4, & i\in\{1,6,8\},\\
5, & i\in\{2,3,4,5\}.
\end{cases}
\]

The graph polynomial of $F_{11}^2$ is
\begin{eqnarray*}
P_{F_{11}^2}(\bm{x})
&=& \prod_{u\sim v,\;u<v}(x_u-x_v)\\
&=& (x_1-x_2)(x_1-x_3)(x_1-x_8)(x_1-x_9)(x_1-x_{10})
(x_2-x_3)\\
&& (x_2-x_4)(x_2-x_9)(x_2-x_{10})(x_2-x_{11})
(x_3-x_4)(x_3-x_5)\\
&& (x_3-x_{10})(x_3-x_{11})(x_4-x_5)(x_4-x_6)
(x_4-x_{11})(x_4-x_{12})\\
&& (x_5-x_6)(x_5-x_7)(x_5-x_{11})(x_5-x_{12})
(x_6-x_7)(x_6-x_8)\\
&& (x_6-x_{12})(x_7-x_8)(x_7-x_9)(x_7-x_{12})
(x_8-x_9)(x_9-x_{10}).
\end{eqnarray*}
A direct computation in \textsc{Mathematica} shows that the coefficient of
\[
x_1^2x_2^3x_3^3x_4^3x_5^3x_6^3x_7^2x_8^3x_9^2x_{10}^2x_{11}^2x_{12}^2
\]
is $1$.

\end{enumerate}

For each of the six configurations above, the indicated nonzero coefficient
implies that $F_i^2$ is Alon--Tarsi $f$-choosable, which contradicts  Claim~\ref{key-remark}.
Therefore none of the configurations
$F_1,F_4,F_6,F_7,F_8$, or $F_{11}$ can occur in $G$.
Together with Claim~24 of~\cite{KLuo25}, this completes the proof of
Claim~\ref{claim-F-one}.


\section*{Acknowledgments}

We thank Professor Xuding Zhu for bringing this problem to our attention and for his insightful comments, which inspired this work.

Seog-Jin Kim was supported by the National Research Foundation of Korea (NRF) grant funded by the Korea government (MSIT) (RS-2026-25470022). Rong Luo was supported by the Simons Foundation (Grant No.~839830).


\end{document}